\numberwithin{equation}{section}\setlength{\textheight}{23cm}\setlength{\textwidth}{16cm}\setlength{\oddsidemargin}{0cm}\setlength{\evensidemargin}{0cm}\setlength{\topmargin}{0cm}
\theoremstyle{plain}\newtheorem{theorem}{Theorem}[section]
\newtheorem{corollary}[theorem]{Corollary}\newtheorem{lemma}[theorem]{Lemma}\newtheorem{proposition}[theorem]{Proposition}
\theoremstyle{definition}
\theoremstyle{definition}
\theoremstyle{definition}
\theoremstyle{definition}
\theoremstyle{definition}
\theoremstyle{definition}
\theoremstyle{definition}\newtheorem{cl}[theorem]{Claim}
\DeclareMathOperator{\Widim}{Widim}
\DeclareMathOperator{\mdim}{mdim}
\newcounter{MainTheoremCounter}
\newtheorem{Maintheorem}[MainTheoremCounter]{Theorem}
\begin{document}

\title
{Minimal subshifts of prescribed mean dimension over general alphabets}

\author[Xiangtong Wang and Hang Zhao]{Xiangtong Wang and Hang Zhao}

\address{X. Wang: School of Mathematical Sciences, University of Science and Technology of China, Hefei, Anhui, 230026, People's Republic of China}

\email{wxt2020@mail.ustc.edu.cn}

\address{H. Zhao: School of Mathematics, Sichuan University, Chengdu, Sichuan, 61006, People's Republic of China}

\email{hangzhaoscu@163.com}

\subjclass[2010]{Primary: 37B05, 54F45}
\keywords{Mean dimension, Minimal dynamical system, Amenable group action, Tiling.\\}

\begin{abstract}
Let $G$ be a countable infinite amenable group, $K$ a finite-dimensional compact metrizable space, and $(K^G,\sigma)$ the full $G$-shift on $K^G$. For any $r\in [0,\mdim(K^G,\sigma))$, we construct a minimal subshift $(X,\sigma)$ of $(K^G,\sigma)$ with $\mdim(X,\sigma)=r$. Furthermore, we construct a subshift of $([0,1]^G,\sigma)$ such that its mean dimension is $1$, and that the set of all attainable values of the mean dimension of its minimal subsystems is exactly the interval $[0,1)$.
\end{abstract}

\maketitle
\thispagestyle{empty}

\section{Introduction}

Mean  dimension is a numerical topological  invariant of dynamical systems. It was introduced by Gromov \cite{Gromov} and has been used by  Lindenstrauss and Weiss \cite{Lindenstrauss--Weiss} to prove the existence of a minimal $\mathbb{Z}$-action on a compact metrizable space which cannot be embedded into the shift on $[0,1]^{\mathbb Z}$, answering a long-standing open question raised by Auslander \cite{Auslander} in the negative. Here we say that a dynamical system can be embedded into another,  if the former is topologically conjugate to a subsystem of the latter.

Mean dimension is a powerful tool for studying dynamical systems with infinite topological  dimension or entropy.  A wonderful example is its application to the  problem of embedding dynamical systems into the shift on the Hilbert cube  $([0,1]^m)^{\mathbb Z}$. For example, when Lindenstrauss and Weiss \cite{Lindenstrauss--Weiss} solved Auslander's question, what they did is to construct a minimal dynamical system of mean dimension arbitrarily given in $[0,+\infty]$. It follows immediately that not every minimal $\mathbb Z $-action can be embedded into $(([0,1]^m)^{\mathbb Z},\sigma)$. An amazing result in this direction was due to Lindenstrauss \cite{Lin99}, he showed that for  $r\in [0,m/36)$, any minimal system of mean dimension $r$ can be embedded into $(([0,1]^m)^{\mathbb Z},\sigma)$. For any $r\ge m/2$, Lindenstrauss and Tsukamoto \cite{LT14} proved that there exists a minimal system of mean dimension $r$, which cannot be embedded into $(([0,1]^m)^{\mathbb Z},\sigma)$. As for $r\in [0,m/2)$, it is a great surprise that any minimal system of mean dimension $r$ can be embedded into $(([0,1]^m)^{\mathbb Z},\sigma)$. This result was attributed to Gutman and Tsukamoto \cite{GT20}, which improved the result of Lindenstrauss mentioned above \cite{Lin99}. 
 For its extension to $\mathbb{Z}^k$-actions, we refer to \cite{Gut15,GQS18,GT14,Gut11,GLT16,GQT19}.

In this paper, we consider mean dimension and minimal subshifts of the full $G$-shift on the product space $K^G$ (denoted by $(K^G,\sigma)$), where $G$ is a countable infinite amenable group and $K$ is a compact metrizable space.

Notice that in the case that $G=\mathbb{Z}$ and $K$ is an $m$-dimensional polyhedron, in particular $K=[0,1]^m$, Lindenstrauss and Weiss \cite{Lindenstrauss--Weiss} indeed showed that for any $r\in[0,m)$ there exists a minimal subshift of $(([0,1]^m)^\mathbb{Z},\sigma)$, whose mean dimension is exactly $r$.   In the case that $G$ is a countable amenable group containing subgroups of arbitrarily large finite index (this condition is automatically satisfied if $G$ is an infinite finitely-generated Abelian group) and $K$ is a polyhedron, for any  $r\in [0,\dim (K)]$, Coornaert and Krieger \cite{M. Coornaert and F. Krieger} constructed a  subshift  (not minimal, in general) of $(K^G,\sigma)$, whose mean dimension is $r$. 
When $G$ is a countable infinite amenable group and  $K$ is a polyhedron, for any $r\in[0,\dim (K))$, Krieger \cite{Kri09} constructed a minimal subshift of $(K^G,\sigma)$, whose mean dimension is larger than $r$.
Further, Dou \cite{Dou} improved this result. By employing tiling techniques,  he proved that for any $r\in[0,\dim (K))$ there exists a minimal subshift of $(K^G,\sigma)$ whose mean dimension  equals  $r$.
 
There are two  significant problems in this direction:
\begin{enumerate}
	\item[(1)] The first problem was posed explicitly by Dou \cite{Dou}. He asked, when $K$ is a polyhedron, whether there exists a minimal subshift of $(K^G,\sigma)$ with full mean dimension, i.e., a minimal subshift whose mean dimension equals  $\mdim(K^G,\sigma)$.
	\item[(2)] The second problem comes implicitly from the statement, asking if we may replace the alphabet (which is assumed to be any polyhedron in the above statement) with an arbitrary finite-dimensional compact metrizable space $K$.
\end{enumerate}

Problem (1) was solved by Jin and Qiao \cite{JQfull}. They successfully constructed  a minimal subshift of the Hilbert cube of full mean dimension with a delicate process, and thus, gave an affirmative answer to this problem. Although their result is stated for $\mathbb{Z}$-actions, they also pointed out that it is straightforward to generalize their result to actions of countable infinite amenable groups, one can find a detailed proof given by Yin and Xiao in \cite{Zhengyu Yin and Zubiao Xiao}.

Our goal is to address Problem (2). While our findings cover all amenable groups, it is significant to note that the corresponding results were previously unknown for the specific case of the integer group $\mathbb{Z}$.

\begin{Maintheorem}\label{maintheorem a}
	Let $G$ be a countable infinite amenable group and $K$ a finite-dimensional compact metrizable space. Then for any $0\le r<\mdim(K^G,\sigma)$ there exists a minimal subshift of $(K^G,\sigma)$, whose mean dimension is $r$. 
\end{Maintheorem}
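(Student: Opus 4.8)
The plan is to write $D:=\mdim(K^G,\sigma)$ and, after disposing of the trivial endpoint, to realise every value $r\in(0,D)$ as the mean dimension of a minimal subshift $X\subseteq K^G$ in which the coordinates that genuinely vary over $K$ occupy an asymptotic density $\rho:=r/D$ of $G$, all remaining coordinates being slaved to them. The endpoint $r=0$ is immediate: a constant configuration $\{(k_0)_{g\in G}\}$ is a one-point, hence minimal, subshift of $(K^G,\sigma)$ of mean dimension $0$, so I would assume $0<r<D$ (in particular $D>0$). The guiding heuristic is that a ``partial full shift'', in which $K$-valued data is placed freely on a subset of $G$ of density $\rho$ and trivially elsewhere, should have mean dimension $\rho D$: writing the mean dimension of the full shift through width dimension as $D=\lim_{\epsilon\to0}\limsup_m \Widim_\epsilon(K^m)/m$, a block of $\rho|F_N|$ free symbols contributes $\approx\rho D|F_N|$ to the width dimension over a F\o lner set $F_N$. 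The whole difficulty is to make such a system genuinely minimal and to pin the density so that the mean dimension equals $r$ on the nose.

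For the construction I would use the amenability of $G$ to fix a nested sequence of F\o lner tilings $\mathcal{T}_1,\mathcal{T}_2,\dots$ of $G$, each level-$(n{+}1)$ tile being a union of level-$n$ tiles, exactly as in the tiling approach of Dou \cite{Dou}; this is the scaffold of a Kakutani--Rokhlin tower. At level $n$ I would designate a sub-collection of ``active'' tiles, place on their interiors $K$-valued data that may be chosen freely, and fill the ``inactive'' positions by copying the data of active tiles according to the tiling combinatorics. The proportion of active volume is tuned level by level so that the cumulative density of positions that are ever free tends to $\rho=r/D$. Minimality is then enforced by the standard hierarchical mechanism of Lindenstrauss--Weiss \cite{Lindenstrauss--Weiss} and Krieger \cite{Kri09}: because $K$ is compact, one copies the contents of each tile, up to a prescribed $\epsilon$-error drawn from a finite $\epsilon$-net, to syndetically many places at all higher levels, so that every finite pattern of every point recurs, up to arbitrarily small error, with bounded gaps; hence every orbit is dense.

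The mean dimension is then computed by squeezing, with both bounds routed through the single quantity $\Widim_\epsilon(K^m)$. For the upper bound $\mdim(X)\le\rho D$, I would note that the configuration on a F\o lner set $F_N$ is determined by the free data of the tiles meeting $F_N$, whose number is $m'=\rho|F_N|+o(|F_N|)$ (copies create no new freedom, and data copied in from outside $F_N$ is a boundary term); since copying is coordinatewise and hence distance-nonincreasing, $X|_{F_N}$ is a continuous, distance-nonincreasing image of $K^{m'}$, so $\Widim_\epsilon(X|_{F_N})\le\Widim_\epsilon(K^{m'})\le(D+o(1))m'$, and dividing by $|F_N|$ gives $\le\rho D$. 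For the lower bound I would exhibit inside $X|_{F_N}$ a closed subset isometric to $K^{m'}$ with $m'\approx\rho|F_N|$, namely the configurations obtained by choosing the active data at the matching scale freely and filling in its forced copies; as $\Widim_\epsilon$ is monotone under passage to closed subsets, $\Widim_\epsilon(X|_{F_N})\ge\Widim_\epsilon(K^{m'})\ge(D-o(1))m'$, whence $\ge\rho D$. The two estimates force $\mdim(X)=\rho D=r$, and by driving the active densities along a sequence converging to $r/D$ I would hit $r$ exactly rather than merely an interval, in the spirit of Jin--Qiao \cite{JQfull}.

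I expect the main obstacle to be the tension between freeness and minimality, sharpened by the generality of $K$. Minimality forces the tile contents to recur and hence to be copied across $G$, whereas the lower bound needs the active coordinates inside each window to remain completely unconstrained; reconciling the two requires arranging the copying to act only across levels, never equating two active coordinates within a common window, together with careful density bookkeeping so that the error terms above are genuinely $o(|F_N|)$ and not secretly linear. The feature that lets the argument survive for an arbitrary finite-dimensional compactum rather than a polyhedron is precisely that every dimension estimate is phrased through the asymptotics of $\Widim_\epsilon(K^m)$ and through monotonicity of $\Widim_\epsilon$ under closed subspaces, never invoking a cube, a simplicial structure, or a continuous section of a map onto a cube inside $K$ (which in general does not exist). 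Checking that these width-dimension asymptotics interlock correctly with the hierarchical copying is the point on which I would spend the most care.
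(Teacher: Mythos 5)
Your scaffold---nested F\o lner tilings as in Dou, free coordinates of density $r/\mdim(K^G,\sigma)$, hierarchical copying through finite $\epsilon$-nets for minimality, and a two-sided $\Widim$ squeeze---is essentially the paper's strategy, and your insistence on phrasing every dimension estimate through the asymptotics of $\Widim_\epsilon(K^m,d_\infty)$ is in principle viable. But two steps are asserted where the real work lies. First, the inequality $\Widim_\epsilon(K^{m'})\ge (D-o(1))m'$ for \emph{fixed} small $\epsilon$, uniformly in $m'$, does not follow from your formula $D=\lim_{\epsilon\to 0}\limsup_m \Widim_\epsilon(K^m)/m$: it amounts to exchanging $\sup_\epsilon$ with $\inf_m$. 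It is true, but you must either invoke subadditivity of $\Widim_\epsilon$ under products with the sup metric plus Fekete's lemma (so that $\limsup_m=\inf_m$ for each $\epsilon$), or use Tsukamoto's uniform bound $\Widim_\epsilon(K^m,d_\infty)\ge m(\dim(K)-1)$ (Lemma \ref{widimlowerbound}); the paper takes the latter route, applying it to the grouped alphabet $K^m$ and the product system $X^m$ together with Corollary \ref{mdimKG} (Proposition \ref{pro:lowerbound}). Second, your justification of the upper bound via ``distance-nonincreasing image'' rests on a false principle: a $1$-Lipschitz surjection can strictly increase $\Widim_\epsilon$ (e.g.\ $\{0,1\}^{\mathbb{N}}\to[0,1]$, $x\mapsto\sum_n 2^{-n}x_n$, sends a space with $\Widim_\epsilon=0$ onto one with $\Widim_\epsilon=1$). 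Here it is repairable, since the parametrization of a window by the free coordinates is an isometry onto its image in the sup metric, but as written the step is wrong.

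The genuine gap is the lower bound for the \emph{minimal} system. You correctly sense the tension between minimality and freeness, but your proposed resolution---arrange the copying ``to act only across levels, never equating two active coordinates within a common window,'' so that active coordinates ``remain completely unconstrained''---cannot work: a minimal subshift is the orbit closure of a single almost periodic point, so no coordinate of any of its points is unconstrained, and the closed subset of $X|_{F_N}$ isometric to $K^{m'}$ that your lower bound needs does not exist for free. The paper's mechanism (Proposition \ref{projection}) is different in kind: the generating point $z$ is built so that at level $k$ its blocks \emph{enumerate all} fillings of the lower-level free positions by a finite $\delta_k$-dense subset $K_k\subset K$ (condition (A.k.2)), with $\delta_k\to 0$, and only in the limit does the orbit closure $\overline{Gz}$ contain the entire set $\{x\in K^G: x|_{G\setminus J}=\omega\}$, which is then fed into Proposition \ref{pro:lowerbound}. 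In your outline the $\epsilon$-nets serve only to force recurrence; you never connect the increasing density of the nets to the recovery of full $K$-valued freedom on $J$ inside the closure. Without that approximation argument (translate $z$ so that a net-pattern $\delta_p$-close to the target data on $J\cap F_N$ appears in the window, with the complement matching $\omega$), the lower bound has no support---and this is precisely the heart of the paper's proof.
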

For a finite group $G$, the conclusion of Theorem \ref{maintheorem a} is not valid.  Owing to $\mdim(X,G)={\dim(X)}/{|G|}$, the set of all  attainable values of  the mean dimension of subsystems of $(K^G,\sigma)$ is discrete.

Also note that  in contrast to $([0,1]^\mathbb{Z},\sigma)$ there is a similar result regarding the  construction  of minimal subsystems of prescribed mean dimension in a function space (so-called Bernstein space, which serves as an analogue of the Hilbert cube) by Zhao \cite{Jianjie Zhao}.

However, it remains unknown whether there exists a minimal subshift of $(K^G,\sigma)$  that achieves full mean dimension, i.e., a mean dimension equal to $\mdim(K^G,\sigma)$. It appears that the method employed in \cite{JQfull}  fails when the space $K$ significantly deviates from Euclidean spaces. Hence a natural question arises:  Is it possible to construct a dynamical system such that, for every $0\le r<\mdim(K^G,\sigma)$, there exists a minimal subsystem of mean dimension $r$, but no minimal subsystem attains  full mean dimension? For this purpose, we present a constructive example as follows:
\begin{Maintheorem}\label{maintheorem b}
	Let $G$ be a countable infinite amenable group. Then there exists a subshift $(M,\sigma)$ of $([0,1]^G,\sigma)$ such that $\mdim(M,\sigma)=1$ and that the set of all  attainable values of the mean dimension of minimal subsystems of $(M,\sigma)$ is exactly the interval $[0,1)$.
\end{Maintheorem}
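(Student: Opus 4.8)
My plan is to reduce Theorem \ref{maintheorem b} to a single construction and then isolate the one genuinely delicate point. Since $M$ is a subshift of $([0,1]^G,\sigma)$ and $\mdim([0,1]^G,\sigma)=\dim[0,1]=1$, monotonicity of mean dimension already gives $\mdim(M,\sigma)\le 1$, and likewise $\mdim(N,\sigma)\le 1$ for every subsystem $N\subseteq M$. Thus it suffices to build a subshift $M$ satisfying three properties: (i) $\mdim(M,\sigma)\ge 1$; (ii) for every $r\in[0,1)$ there is a minimal subsystem $N_r\subseteq M$ with $\mdim(N_r,\sigma)=r$; and (iii) no minimal subsystem of $M$ has mean dimension equal to $1$. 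I emphasize that (iii) cannot be deduced from a general nonexistence statement, since it is not known whether $([0,1]^G,\sigma)$ admits any minimal subshift of full mean dimension; the value $1$ must be excluded by the design of $M$ itself.

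For the construction I would reuse the tiling/Følner machinery underlying Theorem \ref{maintheorem a}. That machinery produces, from a prescribed density parameter $\alpha\in[0,1)$, a minimal subshift $X_\alpha\subseteq([0,1]^G,\sigma)$ with $\mdim(X_\alpha,\sigma)=\alpha$, in which a distinguished set of \emph{anchor} coordinates is pinned to $0$ while the remaining coordinates carry the dimension; the density of the anchor set is bounded below by a positive quantity $\delta(\alpha)>0$ with $\delta(\alpha)\to 0$ as $\alpha\to 1$, and it is exactly the asymptotic density of the non-anchor coordinates that accounts for the mean dimension $\alpha$. I then set $M=\overline{\bigcup_{\alpha\in[0,1)}X_\alpha}$, arranging the codings so that all the $X_\alpha$ live compatibly in one subshift. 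Property (ii) holds by construction, taking $N_r=X_r$. Property (i) is then immediate: by monotonicity $\mdim(M,\sigma)\ge\mdim(X_\alpha,\sigma)=\alpha$ for every $\alpha<1$, hence $\mdim(M,\sigma)\ge\sup_{\alpha<1}\alpha=1$, so in fact $\mdim(M,\sigma)=1$. Note that the supremum need not be attained by any single $X_\alpha$; it is attained only by $M$ as a whole.

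The heart of the argument is property (iii), and here the mechanism is uniform recurrence versus anchor density. I would arrange the construction so that every point of $M$ --- including the accumulation points created as $\alpha\to 1$ --- carries at least one anchor coordinate. If $N\subseteq M$ is minimal, then any finite anchor pattern occurring in a point of $N$ must, by uniform recurrence, reappear syndetically across $G$; syndeticity forces a positive lower Banach density of anchors, say $\ge\delta_N>0$, and hence an upper density of non-anchor coordinates at most $1-\delta_N$. Feeding this into the standard estimate that bounds $\mdim$ by the Følner-averaged $\Widim_\epsilon$ over the non-anchor coordinates (the pinned coordinates contribute nothing), I obtain $\mdim(N,\sigma)\le (1-\delta_N)\cdot\dim[0,1]=1-\delta_N<1$. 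The quantity $\delta_N$ may vary with $N$ and tend to $0$ as one ranges over the minimal subsystems, which is exactly what reconciles (iii) with $\mdim(M,\sigma)=1$.

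The main obstacle I anticipate is controlling the closure: I must ensure that taking $\overline{\bigcup_\alpha X_\alpha}$ does not manufacture a minimal subsystem of mean dimension $1$, i.e.\ that the ``fully active'' limit configurations arising as $\alpha\to1$ either fail to lie in $M$ or fail to be uniformly recurrent. The safe route is to interweave the anchor placement with the tiling hierarchy so that every point of $M$ retains a genuine anchor and so that the only minimal subsystems of $M$ are the anchored ones with $\alpha\in[0,1)$; this is where the careful scale-by-scale bookkeeping of the Theorem \ref{maintheorem a} construction is essential, together with a matching two-sided estimate relating the mean dimension of each anchored subshift to its anchor density. Once the closure is understood at this level, properties (i)--(iii) combine to show that the set of attainable mean dimensions of minimal subsystems of $M$ is exactly $[0,1)$.
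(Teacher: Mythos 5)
Your reduction to properties (i)--(iii) and your identification of (iii) as the crux are correct, but the proposal has a genuine gap exactly there, and it is not one you can defer to ``scale-by-scale bookkeeping.'' First, a factual correction that raises the stakes: it \emph{is} known that $([0,1]^G,\sigma)$ admits a minimal subshift of full mean dimension $1$ (Jin--Qiao \cite{JQfull} for $\mathbb{Z}$, Yin--Xiao \cite{Zhengyu Yin and Zubiao Xiao} for amenable groups; this is Problem (1) in the introduction, solved affirmatively for polyhedral alphabets). So if your closure $M=\overline{\bigcup_{\alpha\in[0,1)}X_\alpha}$ is too large, property (iii) does not merely remain unproven --- it can actually be false. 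And the closure is genuinely in danger of being too large: in the Theorem \ref{maintheorem a} construction the free coordinates are completely unconstrained (this is the content of Proposition \ref{projection}), and the anchor density of $X_\alpha$ is forced to tend to $0$ as $\alpha\to 1$, since anchors that are syndetic with a constant uniform in $\alpha$ would give a uniform positive lower Banach density of pinned coordinates and hence a uniform bound $\mdim(X_\alpha,\sigma)\le 1-c<1$, contradicting $\mdim(X_\alpha,\sigma)=\alpha\to 1$. Consequently, for any fixed finite window $W$ and $\alpha$ close enough to $1$, some point of $X_\alpha$ has a translate of $W$ free of anchors; recentering and passing to limits produces points of $M$ with no anchors at all, and the anchor-free locus is a closed invariant subset of $M$ whose minimal subsystems your recurrence-versus-density mechanism cannot touch. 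In the worst case (nothing in your sketch prevents the free sets from containing translates of arbitrarily large windows), $M$ contains every configuration of $[0,1]^G$ on arbitrarily large windows, i.e.\ $M=[0,1]^G$, and then (iii) fails outright by Jin--Qiao/Yin--Xiao. There is also a secondary unaddressed point: with alphabet $[0,1]$ and anchors pinned to $0$, ``this coordinate is an anchor'' is a closed, not open, condition, so uniform recurrence only yields syndetically many coordinates with values $<\epsilon$; such coordinates still carry full dimension, so your estimate $\mdim(N,\sigma)\le 1-\delta_N$ requires in addition that anchor values be topologically separated from free values (e.g.\ free values confined to $[1/2,1]$), which you never impose.

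The missing idea --- and the route the paper takes --- is to avoid taking any closure at all. Choose pairwise disjoint closed intervals $I_n=[1-2^{-(n-1)},\,1-2^{-n}-\delta_n]\subset[0,1)$ accumulating only at the point $1$, and for each $n$ build a (non-minimal) block subshift $Y_n\subset I_n^G$ whose blocks have free-coordinate proportion strictly between $r_n$ and $1$, where $r_n\uparrow 1$. Then $M=\bigcup_{n}Y_n\cup\{1\}^G$ is \emph{already closed}: points of distinct $Y_n$'s use disjoint alphabets and can only accumulate at the fixed point $1^G$. Since every point of $M$ has all coordinates in a single $I_n$ (or equals $1^G$), every minimal subsystem of $M$ is trapped inside a single $Y_n$ or equals $\{1^G\}$; each $Y_n$ has mean dimension strictly below $1$ (each block has a definite fraction of pinned coordinates), so no minimal subsystem attains $1$, while running the Theorem \ref{maintheorem a} construction with alphabet $K=I_n$ and parameter $t=r$ inside $Y_n$ produces a minimal subshift of mean dimension exactly $r$ for every $r<r_n$, and $\mdim(M,\sigma)\ge\mdim(Y_n,\sigma)\ge r_n\to1$. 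This ``disjoint alphabets accumulating at a point'' device is precisely what replaces your closure step; without it, or some comparable mechanism that keeps the limit points under control, your argument for (iii) does not go through.
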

Observe that if the goal is merely to construct a  dynamical system such that all of its minimal subsystems have mean dimension strictly less than the mean dimension of this system, the proof is significantly streamlined and follows directly  from Theorem \ref{maintheorem a}.

This paper is organized as follows: In Section \ref{sec:preliminaries}, we collect basic knowledge on tilings of amenable groups, subshifts and mean dimension. In Section \ref{sec:maintheorem a}, we prove Theorem \ref{maintheorem a}. In Section \ref{sec:application}, we prove Theorem \ref{maintheorem b}.

\subsection*{Acknowledgements}
The authors would like to thank Wen Huang, Qiang Huo, Lei Jin, Kairan Liu and Yixiao Qiao for their helpful suggestions. X. Wang  is partially supported by National Key R\&D Program of China (No. 2024YFA1013602, 2024YFA1013600 ) and NSFC grant (12090012, 12090010). H. Zhao is supported by NSFC grant (123B2006).

\section{Preliminaries}\label{sec:preliminaries}
\subsection{Tilings of amenable groups.}  There are several equivalent definitions of amenable groups in the literature. The one we give here is due to F\o lner \cite{Fol55}.\footnote{A detail exposition of the theory of amenable groups may be found for example in \cite{Gre69,Pat88}. }

For a group $G$, we denote by $\mathcal{F}(G)$ the collection of all nonempty finite subsets of $G$. A countable group $G$ is called \textbf{amenable} if there exists a sequence $\{F_{n}\}_{n=1}^{\infty}\subset\mathcal{F}(G)$ such that for any $g\in G$ we have $\lim_{n\to\infty}\frac{|F_{n}\triangle gF_{n}|}{|F_{n}|}=0$,
where  $|\cdot|$ denotes the cardinality of a set.  Such a sequence  $\{F_{n}\}_{n=1}^{\infty} $ is called  a \textbf{F\o lner sequence} of  the group $G$.

For $T\in\mathcal{F}(G)$ and $\epsilon>0$, we say that a subset $F$ of $G$ is \textbf{$(T,\epsilon)$-invariant} if $\frac{|B(F,T)|}{|F|}<\epsilon$,  where  $B(F,T):=\left\{g\in G:Tg\cap F\ne\emptyset,\;Tg\cap(G\setminus F)\ne\emptyset\right\}.$

The following statement can be directly proven from the definition: $\{F_{n}\}_{n=1}^{\infty}$ is a F\o lner sequence of $G$ if and only if for any $T\in\mathcal{F}(G)$ and any $\epsilon>0$, $F_{n}$ is $(T,\epsilon)$-invariant for $n$ sufficiently large if and only if for any $T\in\mathcal{F}(G)$ and any $\epsilon>0$, ${|F_{n}\triangle TF_{n}|}/{|F_{n}|}<\epsilon$ for $n$ sufficiently large.

Let $G$ be a countable infinite amenable group. We say that $\mathcal{T}$ is a \textbf{tiling} of $G$ if $\mathcal{T}\subset\mathcal{F}(G)$, $\bigcup_{T\in\mathcal{T}}T=G$ and $T\cap T'=\emptyset$ when $T\neq T'$. Every element in the tiling $\mathcal{T}$ is called a \textbf{$\mathcal{T}$-tile} (or a \textbf{tile}). A tiling $\mathcal{T}$ of $G$ is said to be \textbf{finite} if $\mathcal{T}$ can be generated by translating elements in  a finite collection  $\mathcal{S}_{\mathcal{T}}\subset\mathcal{F}(G)$, i.e., for each $T\in\mathcal{T}$ there exist $S\in\mathcal{S}_{\mathcal{T}}$ and $c\in G$ such that $Sc=T$. Every element in $\mathcal{S}_{\mathcal{T}}$ is called a \textbf{shape} of $\mathcal{T}$. For every shape $S\in\mathcal{S}_{\mathcal{T}}$ the \textbf{center} of $S$ is defined by $C(S)=\{c\in G:Sc\in\mathcal{T}\}\subset G$.
The \textbf{translation} of a tiling $\mathcal{T}$ by $g\in G$ is $\mathcal{T}g=\{Tg:T\in\mathcal{T}\}$, which is also a tiling of $G$. For $F\in\mathcal{F}(G)$ we set $\mathcal{T}|_{F}=\{T\cap F:T\in\mathcal{T}\}$. A finite tiling $\mathcal{T}$ of $G$ is called \textbf{syndetic} if for every shape $S\in\mathcal{S}_{\mathcal{T}}$ the center $C(S)$ is syndetic. 

A sequence $\{\mathcal{T}_{k}\}_{k=1}^{\infty}$ of finite tilings of $G$ is called \textbf{primely congruent} if for every $k\ge1$, $\mathcal{T}_{k}$ is a refinement of $\mathcal{T}_{k+1}$ (i.e.,  every $\mathcal{T}_{k+1}$-tile is a union of some $\mathcal{T}_{k}$-tiles) and each shape of $\mathcal{T}_{k+1}$ is partitioned by shapes of $\mathcal{T}_{k}$ in a unique way (i.e., for any two $\mathcal{T}_{k+1}$-tiles $Sc_{1}$ and $Sc_{2}$ of the same shape $S\in\mathcal{S}_{\mathcal{T}_{k+1}}$ we have ${\mathcal{T}_{k}}|_{Sc_{1}}=({\mathcal{T}_{k}}|_{Sc_{2}})c_{2}^{-1}c_{1}$).

We borrow the following two propositions in our proof.
\begin{proposition} [{\cite[Proposition 2.2]{Lei Jin and  Park Kyewon Koh and Yixiao Qiao  }}]\label{bigproportion}
	Suppose that  $\mathcal{T}$ is a finite tiling of $G$.
	Then  for any $\epsilon>0$, there exist $\delta>0$ and $K\in\mathcal{F}(G) $ such that for each $g\in G$ and each $(K,\delta)$-invariant $F\in\mathcal{F}(G)$, the union of those $\mathcal{T}g$-tiles which are contained in $F$ has proportion larger than $1-\epsilon$, namely
	$\frac{|\bigcup_{T\in\mathcal{T}g,T\subset F}T|}{|F|}>1-\epsilon$.
\end{proposition}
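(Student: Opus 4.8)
The plan is to reduce everything to the boundary set $B(F,\cdot)$ that appears in the definition of $(T,\epsilon)$-invariance, exploiting the fact that a finite tiling has only finitely many shapes. The starting observation is that, since $\mathcal{T}g$ is again a tiling of $G$, every element of $F$ lies in exactly one $\mathcal{T}g$-tile, and the tile $T'$ containing a given point of $F$ either satisfies $T'\subset F$ or also meets $G\setminus F$. Thus $F$ splits as the disjoint union of the tiles contained in $F$ together with the portions $T'\cap F$ coming from tiles that \emph{straddle} the boundary, i.e. tiles $T'$ with $T'\cap F\ne\emptyset$ and $T'\cap(G\setminus F)\ne\emptyset$. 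Consequently the part of $F$ not covered by $\bigcup_{T\in\mathcal{T}g,\,T\subset F}T$ is covered by the straddling tiles, and it suffices to bound the total cardinality of these tiles by $\epsilon|F|$.

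The key identification is the following. Writing $\mathcal{T}g=\{S(cg):Sc\in\mathcal{T}\}$, a $\mathcal{T}g$-tile of shape $S\in\mathcal{S}_{\mathcal{T}}$ has the form $Sh$ for a suitable $h\in G$. Then, directly from the definition of $B(F,S)$, the tile $Sh$ straddles the boundary of $F$ precisely when $h\in B(F,S)$. Moreover distinct tiles of the same shape give distinct values of $h$ (equal $h$ would force equal sets $Sh$), so the number of straddling $\mathcal{T}g$-tiles of shape $S$ is at most $|B(F,S)|$. Since each such tile has cardinality $|S|$, the straddling tiles contribute at most $\sum_{S\in\mathcal{S}_{\mathcal{T}}}|S|\,|B(F,S)|$ to $|F|$.

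It then remains to choose $K$ and $\delta$ so that $(K,\delta)$-invariance controls all the quantities $|B(F,S)|$ simultaneously. Here I would take $K=\bigcup_{S\in\mathcal{S}_{\mathcal{T}}}S$, a single finite set because $\mathcal{T}$ is finite. A one-line check shows that $S\subset K$ implies $B(F,S)\subset B(F,K)$: if $Sh$ meets both $F$ and $G\setminus F$, then so does the larger set $Kh$. Hence $|B(F,S)|\le|B(F,K)|$ for every shape $S$, and if $F$ is $(K,\delta)$-invariant this is $<\delta|F|$. Substituting, the straddling tiles cover less than $\delta\bigl(\sum_{S\in\mathcal{S}_{\mathcal{T}}}|S|\bigr)|F|$ of $F$, so choosing $\delta=\epsilon\big/\sum_{S\in\mathcal{S}_{\mathcal{T}}}|S|$ gives the desired proportion $>1-\epsilon$.

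As for difficulty, the argument is essentially bookkeeping against the definitions, and no single step is genuinely hard. The one point that must be handled with care is the uniformity in $g$: one should ensure the counting bound $|B(F,S)|$ for the straddling tiles is intrinsic to $F$ and $S$ and never refers to the particular translate $\mathcal{T}g$, which is exactly why passing through $B(F,S)$—rather than trying to track the individual centers $C(S)$—is the right move. The only other thing to keep honest is that $F$ is cleanly partitioned by $\mathcal{T}g$, so that a tile meeting $F$ but not contained in $F$ is forced to be a straddling tile.
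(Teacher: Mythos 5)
Your proof is correct: the decomposition of $F$ into tiles contained in $F$ plus pieces of straddling tiles, the injection of straddling tiles of shape $S$ into $B(F,S)$, the monotonicity $B(F,S)\subset B(F,K)$ for $K=\bigcup_{S\in\mathcal{S}_{\mathcal{T}}}S$, and the choice $\delta=\epsilon\big/\sum_{S\in\mathcal{S}_{\mathcal{T}}}|S|$ all check out, and the bound is indeed uniform in $g$ since the shapes of $\mathcal{T}g$ coincide with those of $\mathcal{T}$. The paper itself does not reprove this statement (it is quoted from \cite{Lei Jin and  Park Kyewon Koh and Yixiao Qiao  }), and your argument is essentially the same standard boundary-counting proof as in that reference.
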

\begin{proposition}[{\cite[Proposition 2.3]{Lei Jin and  Park Kyewon Koh and Yixiao Qiao  }}, {\cite[Lemma 3.2]{Dou}}]  \label{manytiles}
    Suppose that  $\mathcal{T}$ is a syndetic finite tiling of $G$ and  $\mathcal{S}_{\mathcal{T}}$ is s set of shapes of  $\mathcal{T}$. Then for any $n\in\mathbb{N}$ there exist $K\in\mathcal{F}(G)$ and $\epsilon>0$ such that for every $S\in\mathcal{S}_{\mathcal{T}}$ and every $(K,\epsilon)$-invariant $F\in\mathcal{F}(G)$, $F$ contains at least $n$ $\mathcal{T}$-tiles of the shape $S$.
\end{proposition}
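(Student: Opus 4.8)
The plan is to prove the statement one shape at a time and then invoke the finiteness of the set of shapes. Fix a shape $S\in\mathcal{S}_{\mathcal{T}}$. The number of $\mathcal{T}$-tiles of shape $S$ contained in a finite $F$ is $N(S,F):=|\{c\in C(S):Sc\subseteq F\}|=|C(S)\cap F^{-S}|$, where $F^{-S}:=\{g\in G:Sg\subseteq F\}=\bigcap_{s\in S}s^{-1}F$ is the $S$-erosion of $F$. I would show that for a suitable fixed $K_{0}\in\mathcal{F}(G)$ and all sufficiently small $\epsilon>0$, every $(K_{0},\epsilon)$-invariant $F$ satisfies $N(S,F)\ge c_{0}|F|$ for a constant $c_{0}>0$ depending only on the tiling; since there are finitely many shapes, $K_{0}$, $\epsilon$ and $c_{0}$ can be taken uniform in $S$. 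To finish I would enlarge $K_{0}$ so as to force $|F|$ to be large: if $e\in K$ and $\epsilon<1$, then the non-boundary part of $F$ is exactly $F\setminus B(F,K)=\{g\in F:Kg\subseteq F\}$, of cardinality at least $(1-\epsilon)|F|>0$, so $F$ has an interior point $g$ with $Kg\subseteq F$ and hence $|F|\ge|K|$. Taking $K\supseteq K_{0}\cup\{e\}$ with $|K|\ge n/c_{0}$ then gives $N(S,F)\ge c_{0}|F|\ge c_{0}|K|\ge n$; here $(K,\epsilon)$-invariance implies $(K_{0},\epsilon)$-invariance because $B(F,K_{0})\subseteq B(F,K)$ whenever $K_{0}\subseteq K$.

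The core is the lower bound $N(S,F)\ge c_{0}|F|$, which is where syndeticity enters. Since $\mathcal{T}$ is syndetic, each $C(S)$ is syndetic, so there is a finite $H_{S}\subseteq G$ with $H_{S}C(S)=G$; set $H:=\bigcup_{S}H_{S}$. For any finite $E\subseteq G$ we have $E\subseteq\bigcup_{h\in H_{S}}hC(S)$, hence $|E|\le\sum_{h\in H_{S}}|E\cap hC(S)|=\sum_{h\in H_{S}}|h^{-1}E\cap C(S)|$, so some $h$ satisfies $|h^{-1}E\cap C(S)|\ge|E|/|H|$. Comparing $h^{-1}E$ with $E$ through $|h^{-1}E\cap C(S)|\le|E\cap C(S)|+|h^{-1}E\setminus E|$ then yields $|C(S)\cap E|\ge|E|/|H|-\max_{h\in H}|h^{-1}E\triangle E|$. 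This is the quantitative ``syndetic sets have positive lower density'' statement that drives the argument.

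It remains to apply this with $E=F^{-S}$ and to absorb the error terms into the invariance. First, $F^{-S}$ is close to $F$: from $F\setminus F^{-S}=\bigcup_{s\in S}(F\setminus s^{-1}F)$ we get $|F\setminus F^{-S}|\le\sum_{s\in S}|s^{-1}F\triangle F|=\sum_{s\in S}|F\triangle sF|\le|S|\,\epsilon|F|$ once $F$ is invariant enough, using the Følner characterisation $|F\triangle TF|/|F|<\epsilon$ recalled in the preliminaries with $T\supseteq S\cup S^{-1}$. Second, for $h\in H$ the term $|h^{-1}F^{-S}\triangle F^{-S}|$ is bounded by $|h^{-1}F\triangle F|$ plus twice $|F\setminus F^{-S}|$, again $O(\epsilon|F|)$ provided $K_{0}\supseteq\big(\bigcup_{S}(S\cup S^{-1})\big)\cup H\cup H^{-1}\cup\{e\}$. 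Combining, $N(S,F)=|C(S)\cap F^{-S}|\ge|F^{-S}|/|H|-O(\epsilon|F|)\ge|F|/(2|H|)$ for $\epsilon$ small, so one may take $c_{0}=1/(2|H|)$.

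The main obstacle is exactly this passage from the purely qualitative syndeticity of the centers to a uniform, quantitative count valid inside every sufficiently invariant window, while keeping all boundary errors (both the $S$-erosion loss $F\setminus F^{-S}$ and the translation defects $h^{-1}E\triangle E$) uniformly small across the finitely many shapes. Once the positive-lower-density estimate is isolated, the remainder is routine bookkeeping with the invariance inequalities together with the observation that deep invariance forces $|F|\ge|K|$ to be large.
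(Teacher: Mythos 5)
The paper itself gives no proof of this proposition --- it is imported verbatim from the two cited references (Jin--Park--Qiao and Dou) --- so the only question is whether your argument stands on its own. It does, and it follows the same standard route as those sources: syndeticity of each $C(S)$ forces the centers to have lower density at least $1/|H|$ in every finite window, sufficient invariance makes the erosion $F^{-S}$ occupy all but $O(\epsilon|F|)$ of $F$, and combining the two gives $N(S,F)\ge |F|/(2|H|)$, after which forcing $|F|\ge|K|$ finishes. Three details deserve tightening, none of which affects the structure. (i) The equivalence ``recalled in the preliminaries'' is a statement about F\o lner \emph{sequences}, not individual sets; what you actually need is the one-line observation that if $e\in K$ and $t\in K$ then $t^{-1}F\triangle F\subseteq B(F,K)$, hence $|t^{-1}F\triangle F|<\epsilon|F|$ for every $(K,\epsilon)$-invariant $F$; with $S\cup H\cup\{e\}\subseteq K_0$ this cleanly justifies all of your $O(\epsilon|F|)$ error terms. (ii) Shapes other than $S_{n,1}$ need not contain $e$, so $F^{-S}\subseteq F$ can fail; your bound on $|h^{-1}F^{-S}\triangle F^{-S}|$ should therefore use $|F\triangle F^{-S}|$ rather than $|F\setminus F^{-S}|$. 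The missing piece $F^{-S}\setminus F$ is contained in $s_0^{-1}F\setminus F$ for any fixed $s_0\in S$, so it too is $O(\epsilon|F|)$ and the estimate survives. (iii) In a group with torsion, distinct centers can label the same tile: $Sc_1=Sc_2$ exactly when $c_1c_2^{-1}$ lies in $\{g\in G:Sg=S\}$, a set of at most $|S|$ elements; hence $N(S,F)\ge n$ only guarantees $n$ tiles up to this multiplicity, and you should demand $|K|\ge 2|H|\,n\max_{S\in\mathcal{S}_{\mathcal{T}}}|S|$ to get $n$ genuinely distinct tiles of each shape.
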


\subsection{Group actions}\label{subsec:groupaction}
Throughout this paper, by  a \textbf{$G$-action} we always understand a triple $(X,G,\Phi)$, where $X$ is a compact metrizable space, $G$ is a  countable infinite  discrete\footnote{Since $G$ is countable, the topology on $G$ we use here is the discrete topology.} amenable group with the identity element $e$, and $\Phi:G\times X\to X$, $(g,x)\mapsto\Phi(g,x)$ is a continuous map satisfying that
$\Phi(e,x)=x$, $\Phi(gh,x)=\Phi(g,\Phi(h,x))$, for all $x\in X$ and $g,h\in G$. Usually, $(X,G,\Phi)$ and $\Phi(g,x)$ are abbreviated to $(X,G)$ and $gx$, respectively. We say that $(Y,G)$ is a \textbf{subsystem} of $(X,G)$ if $Y$ is a closed $G$-invariant subset of $X$, under the action of $G$ restricted to $Y$.

 We say a $G$-action $(X,G)$ is \textbf{minimal} if for every $x\in X$, its \textit{orbit} $Gx:=\{gx\;:\;g\in G\}$ is dense in $X$. A subset $S$ of $G$ is called \textbf{syndetic} if there exists a finite subset $F$ of $G$ such that $G=FS$, where $FS=\{fs:f\in F,s\in S\}$. A point $x\in X$ is said to be \textbf{almost periodic} if for each neighborhood $U$ of $x$, there is a syndetic subset $S$ of $G$ such that $Sx\subset U$. The following lemma reveals a relationship between minimality and almost periodicity.

\begin{lemma}[{\cite[Chapter 1]{Auslander}}]\label{minimality}
A $G$-action $(X,G)$ is minimal if and only if $X$ is the orbit closure of an almost periodic point.
\end{lemma}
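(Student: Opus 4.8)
The plan is to prove the two implications separately, using a compactness argument for the forward direction and a uniform-continuity (equicontinuity) argument for the reverse. Throughout I fix a compatible metric $d$ on $X$ and recall that each $g\in G$ acts as a homeomorphism, so every orbit closure $\overline{Gy}$ is a nonempty closed $G$-invariant set.

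For the forward implication, suppose $(X,G)$ is minimal. I would fix an arbitrary point $x_0\in X$; minimality gives $\overline{Gx_0}=X$, so $X$ is already the orbit closure of $x_0$, and it remains only to verify that $x_0$ is almost periodic. Given a neighborhood $U$ of $x_0$, I would form the return-time set $N=\{g\in G:gx_0\in U\}$ and show it is syndetic. The key observation is that for every $y\in X$ minimality yields $x_0\in\overline{Gy}$, hence some $h\in G$ with $hy\in U$, so that $y\in h^{-1}U$; therefore $\{h^{-1}U:h\in G\}$ is an open cover of the compact space $X$. Extracting a finite subcover indexed by $h_1,\dots,h_k$ and applying it to each point $gx_0$ of the orbit shows $G=\bigcup_{i} h_i^{-1}N$, so that $N$ is syndetic with $Nx_0\subset U$; this is exactly almost periodicity of $x_0$.

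For the reverse implication, assume $X=\overline{Gx_0}$ with $x_0$ almost periodic. It suffices to prove that $x_0\in\overline{Gy}$ for every $y\in X$: then $Gx_0\subset\overline{Gy}$, and taking closures gives $X=\overline{Gx_0}\subset\overline{Gy}$, forcing $\overline{Gy}=X$ and hence minimality. To produce, for each $\varepsilon>0$, an element $g$ with $d(gy,x_0)<\varepsilon$, I would first apply almost periodicity to the neighborhood $B(x_0,\varepsilon/2)$, obtaining a syndetic set $S$ and a finite set $F$ with $G=FS$ and $d(sx_0,x_0)<\varepsilon/2$ for all $s\in S$. Since the finite family $\{f^{-1}:f\in F\}$ is uniformly equicontinuous on the compact metric space $X$, I can choose $\delta>0$ so that $d(p,q)<\delta$ implies $d(f^{-1}p,f^{-1}q)<\varepsilon/2$ for every $f\in F$. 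Density of $Gx_0$ then yields $a\in G$ with $d(ax_0,y)<\delta$; writing $a=fs$ with $f\in F$ and $s\in S$, a triangle-inequality estimate using $f^{-1}ax_0=sx_0$ gives $d(f^{-1}y,x_0)\le d(f^{-1}y,f^{-1}ax_0)+d(sx_0,x_0)<\varepsilon$, so that $g=f^{-1}$ works.

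The hard part will be the reverse direction: almost periodicity of $x_0$ only controls the return behaviour of the single orbit $Gx_0$, and the crux is to transfer this control to an arbitrary point $y$, i.e. to show that $y$ can be pushed close to $x_0$. The equicontinuity of the finite translation family $\{f^{-1}:f\in F\}$ is precisely what converts syndeticity of the return times into genuine recurrence of $y$ toward $x_0$, and the delicate point is making the two $\varepsilon/2$ estimates combine correctly after applying $f^{-1}$ to the approximation $ax_0\approx y$. By contrast, the forward direction is comparatively routine, its only essential ingredient being compactness of $X$.
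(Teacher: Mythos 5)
Your proof is correct: the compactness argument covering $X$ by translates $h^{-1}U$ gives syndeticity of the return set in the forward direction, and the $G=FS$ decomposition combined with uniform equicontinuity of the finite family $\{f^{-1}:f\in F\}$ correctly transfers recurrence to an arbitrary $y$ in the reverse direction. The paper itself gives no proof, citing Auslander's book, and your argument is essentially the standard one found there, so there is nothing further to compare.
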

Let  $(K,d)$ be a 
compact metric space.  For each $m\in\mathbb{N}$, we equip  $K^m$ with the product topology and define a compatible metric $d_{\infty}$ on $K^{m}$  by  $$d_{\infty}\left((x_{1},x_{2},\dots,x_{m}),(y_{1},y_{2},\dots,y_{m})\right)=\max_{1\le i\le m}d(x_{i},y_{i}).$$ We also equip $K^G= \{(x_{g})_{g\in G}: x_g\in K\}$ with the product topology and define  a compatible metric $D$ on $K^G$ by 
\begin{equation}\label{metricrho}
D(x,y)=\sum_{g\in G}\alpha_{g}d(x_{g},y_{g}),\quad\forall x=(x_{g})_{g\in G},\;y=(y_{g})_{g\in G}\in K^{G},
\end{equation}
where $(\alpha_g)_{g\in G}\subset(0,+\infty)$ satisfies
$\alpha_{e}=1,\quad\sum_{g\in G}\alpha_{g}<+\infty.\footnote{Note that $G$ is countable.}$
Hence
\begin{equation}
    \label{equ:bigsmall}
    d(x_e,y_e)\le D(x,y), \quad\forall x=(x_{g})_{g\in G},\;y=(y_{g})_{g\in G}\in K^{G}.
\end{equation}

The \textbf{full $G$-shift} on $K^{G}$, denoted by $(K^G,\sigma)$, is the $G$-action  defined by
$$\sigma:G\times K^{G}\to K^{G},\quad(g,(x_{h})_{h\in G})\mapsto(x_{hg})_{h\in G}\footnote{The notation $\sigma$ may be kept in different  shifts if there is no ambiguity.}$$
for all $g\in G$  and $(x_{h})_{h\in G}\in K^{G}$. A subsystem of $(K^{G},\sigma)$ is called a \textbf{subshift}.

For $x=(x_{g})_{g\in G}\in K^{G}$ and $F\subset G$, we denote by $x|_{F}=(x_{g})_{g\in F}\in K^F$ the restriction of $x$ to $F$, and $\pi_{F}:K^{G}\to K^{F},\ x\mapsto x|_{F}$ the canonical projection map. For $q\in K$, we set $x(F,q)=\left\{g\in F:x_{g}=q\right\}\subset G$.
\subsection{Mean dimension} Here we review the definition of mean dimension. For the details, see  \cite{Gromov,Lindenstrauss--Weiss}.

Let $X$ be a compact metrizable space, $\rho$ a compatible metric on $X$, and $P$ a polyhedron.
For $\epsilon>0$, a continuous map $f:X\to P$ is called an \textbf{$\epsilon$-embedding} with respect to $\rho$, if $f(x)=f(y)$ implies $\rho(x,y)<\epsilon$, for all $x,y\in X$. Let $\Widim_{\epsilon}(X,\rho)$ be the minimum dimension of a polyhedron $P$ such that there is an $\epsilon$-embedding $f:X\to P$ with respect to the metric $\rho$. It is classically known that the topological dimension of $X$ may be recovered by $\dim(X)=\lim_{\epsilon\to0}\Widim_{\epsilon}(X,\rho)$.

Let $(X,G)$ be a $G$-action. For $F\in\mathcal{F}(G)$,
we define a  compatible metric $\rho_{F}$ on $X$ by
\begin{equation}
\label{equ:disdyn}
\rho_{F}(x,y)=\max_{g\in F}\rho(gx,gy).\end{equation}
The \textbf{mean dimension} of $(X,G)$ is defined by
$$\mdim(X,G)=\lim_{\epsilon\to0}\lim_{n\to\infty}\frac{\Widim_{\epsilon}(X,\rho_{F_{n}})}{|F_{n}|},$$
where $\{F_{n}\}_{n=1}^{\infty}$ is a F\o lner sequence of $G$. 
It is well known that the limit in the above definition always exists. The mean dimension is a topological invariant of  the $G$-action $(X,G)$, and the value $\mdim(X,G)$ is independent of the choice of the F\o lner sequence $\{F_{n}\}_{n=1}^{\infty}$ and the metric $\rho$. For example, $\mdim(([0,1]^m)^G,\sigma)=m.$

For $J\subset G,$  the \textbf{density} $\delta(J)\in [0,1]$ of $J$ in $G$ is defined by 
\begin{equation}
    \label{density}
\delta(J):=\sup_{\{F_n\}}\limsup_{n\to \infty}\cfrac{|J\cap F_n|}{|F_n|},\end{equation}
where $\{F_n\}^\infty_{n=1}$ runs over all F\o lner sequences of $G.$

We shall employ a technical lemma due to Tsukamoto \cite[Lemma 3.1]{Tsukamoto}, which applies to finite-dimensional compact metrizable spaces. 

\begin{lemma}[{\cite[Lemma 3.1]{Tsukamoto}}]\label{widimlowerbound}
	Let $(K,d)$ be a finite-dimensional compact metric space. Then there is some $\delta>0$ such that for all $m\in\mathbb{N}$ and all $0<\epsilon<\delta$ we have $\Widim_\epsilon(K^m,d_\infty)\ge m\cdot(\dim(K)-1)$.
\end{lemma}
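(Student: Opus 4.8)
The plan is to reduce the geometric statement about $\Widim_\epsilon$ to a purely cohomological lower bound over a field of coefficients for which cohomological dimension behaves multiplicatively on products, and then to convert that bound back into a width-dimension estimate through a nerve/covering argument carried out at a scale that is intrinsic to $K$ and does not depend on $m$. The whole point of the ``$-1$'' in the statement will turn out to be the (at most one-dimensional) loss incurred when replacing integral coefficients by a field.

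First I would set $n=\dim(K)$ and dispose of the trivial case $n\le 1$, in which the right-hand side is $\le 0$ while $\Widim_\epsilon\ge 0$ always. For $n\ge 2$, the essential input is from cohomological dimension theory: by Alexandroff's theorem $\dim_{\mathbb Z}(K)=\dim(K)$ (valid since $K$ is a finite-dimensional compactum) together with the Bockstein inequalities, there is a prime field $\mathbb F\in\{\mathbb Q\}\cup\{\mathbb Z/p:p\text{ prime}\}$ with $\dim_{\mathbb F}(K)\ge \dim(K)-1=:k$. Hence $\check H^{k}(K;\mathbb F)\ne 0$ (allowing a relative class on a closed pair if necessary), so there exist a finite open cover $\mathcal U$ of $K$ and a nonzero class $u$ represented on the nerve $N(\mathcal U)$. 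I would let $\delta>0$ be a Lebesgue number of $\mathcal U$; this is the constant asserted by the lemma, and it is intrinsic to $K$, independent of $m$.

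Next I would produce on $K^m$ a nonzero class of degree $mk$ detectable at the same scale $\delta$ in the metric $d_\infty$. Writing $p_i:K^m\to K$ for the coordinate projections, I would form the external cup product $w=p_1^{*}u\smile\cdots\smile p_m^{*}u$. Over a field, the Künneth theorem for \v Cech cohomology of compacta makes the cross product of nonzero classes nonzero, so $w\ne 0$ in $\check H^{mk}(K^m;\mathbb F)$, represented on the nerve of the product cover $\mathcal U\times\cdots\times\mathcal U$. The key metric observation is that $\diam_{d_\infty}(U_{1}\times\cdots\times U_{m})=\max_i\diam_{d}(U_{i})$, so the product cover has Lebesgue number at least $\delta$ for $d_\infty$, uniformly in $m$. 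Thus any finite open cover $\mathcal V$ of $(K^m,d_\infty)$ whose elements have $d_\infty$-diameter $<\delta$ refines the product cover, and since a nonzero \v Cech class never restricts to zero on a refinement of a cover representing it, the restriction of $w$ to $N(\mathcal V)$ is nonzero; hence $\dim N(\mathcal V)\ge mk$ and $\mathcal V$ has multiplicity $\ge mk+1$. Finally, given an $\epsilon$-embedding $f:K^m\to P$ into a polyhedron with $0<\epsilon<\delta$, a compactness argument shows $f$ is uniformly injective up to scale $\epsilon$: there is $\eta>0$ with $d_\infty(x,y)<\epsilon$ whenever $f(x),f(y)$ are $\eta$-close. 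Pulling back a sufficiently fine open cover of $P$ of multiplicity $\le\dim(P)+1$ then gives a cover $\mathcal V=f^{-1}(\mathcal W)$ of $(K^m,d_\infty)$ with elements of diameter $<\epsilon<\delta$ and multiplicity $\le\dim(P)+1$. Comparing with the bound $mk+1$ forces $\dim(P)\ge mk\ge m(\dim(K)-1)$, and taking the infimum over $\epsilon$-embeddings yields $\Widim_\epsilon(K^m,d_\infty)\ge m(\dim(K)-1)$ for all $0<\epsilon<\delta$.

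I expect the main obstacle to be the bookkeeping in the middle step: making rigorous that the nonzero class of $K$ persists, at the fixed scale $\delta$ and in degree exactly $mk$, under the passage to $K^m$ and to arbitrary fine refinements. Concretely this needs (i) the correct form of the Künneth theorem for (possibly relative) \v Cech cohomology of compact metric spaces with field coefficients, so that the $m$-fold cross product is genuinely nonzero, and (ii) the direct-limit argument that a nonzero \v Cech class cannot restrict to zero on any refinement of a representing cover. Treating relative versus absolute classes cleanly, and checking that the Lebesgue-number/mesh comparison interacts correctly with the product structure, are the delicate points; everything else is routine once $\delta$ has been fixed independently of $m$.
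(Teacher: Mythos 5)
Your argument is, in essence, a faithful reconstruction of the proof in the cited source \cite[Lemma 3.1]{Tsukamoto} (the paper itself does not reprove the lemma): the Alexandroff--Bockstein reduction to a field $\mathbb{F}$ with $\dim_{\mathbb{F}}(K)\ge\dim(K)-1$, the nonvanishing cross-power class in \v{C}ech cohomology via the K\"unneth formula over a field, the observation that the sup-metric $d_\infty$ makes the detection scale $\delta$ uniform in $m$, and the Lebesgue-number/nerve comparison against an $\epsilon$-embedding forcing $\dim(P)\ge m(\dim(K)-1)$. The delicate points you flag (working with relative classes on closed pairs, and the direct-limit fact that a nonzero \v{C}ech class survives every refinement of a representing cover) are handled in exactly this way in Tsukamoto's proof, so your proposal is correct and takes essentially the same route.
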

Tsukamoto \cite{Tsukamoto} observed a dichotomy for mean dimension of the full $\mathbb{Z}$-shift on $K^\mathbb{Z}$. More precisely, he proved that either $\mdim(K^\mathbb{Z},\sigma)= \dim(K)$ or $\mdim(K^\mathbb{Z},\sigma)= \dim(K)-1$. This conclusion admits a straightforward  generalization to the setting of amenable group actions.
\begin{corollary}\label{mdimKG}
	Let $G$ be a countable infinite amenable group and $K$ a finite-dimensional compact metrizable space. Then  $$\mdim(K^G,\sigma)=\lim_{n\to\infty}\frac{\dim(K^n)}{n}.$$		
\end{corollary}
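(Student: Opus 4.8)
The plan is to prove the two inequalities $\mdim(K^G,\sigma)\le c$ and $\mdim(K^G,\sigma)\ge c$, where $c:=\lim_{n\to\infty}\dim(K^n)/n$. First I would record that this limit exists: since covering dimension is subadditive under products, $\dim(K^{m+m'})\le\dim(K^m)+\dim(K^{m'})$, so Fekete's lemma gives $c=\inf_{n}\dim(K^n)/n$, a finite number lying in $[0,\dim K]$. Everything is then computed along a fixed F\o lner sequence $\{F_n\}$, which is legitimate since $\mdim$ is independent of this choice.

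For the upper bound, fix $\epsilon>0$ and choose a finite set $E\ni e$ so large that $\diam(K,d)\sum_{g\notin E}\alpha_g$ is small. I would check that the projection $\pi_{EF_n}\colon K^G\to K^{EF_n}$ followed by a sufficiently fine polyhedral $\epsilon'$-embedding of the finite-dimensional space $(K^{EF_n},d_\infty)$ is an $\epsilon$-embedding of $(K^G,\rho_{F_n})$: for each $g\in F_n$ the coordinates indexed by $Eg\subset EF_n$ are controlled by $\epsilon'$, while the remaining coordinates contribute at most $\diam(K,d)\sum_{g\notin E}\alpha_g$ to $D(gx,gy)$. This yields $\Widim_\epsilon(K^G,\rho_{F_n})\le\dim(K^{|EF_n|})$. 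Since $\{F_n\}$ is F\o lner, $|EF_n|/|F_n|\to1$, and since $|EF_n|\to\infty$, I get $\dim(K^{|EF_n|})/|F_n|=\bigl(\dim(K^{|EF_n|})/|EF_n|\bigr)\cdot\bigl(|EF_n|/|F_n|\bigr)\to c$. Letting $\epsilon\to0$ gives $\mdim(K^G,\sigma)\le c$.

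For the lower bound I would first establish a \emph{single-copy} inequality: embedding $K^{S}$ into $K^G$ (for $S\subset F_n$) by freezing the coordinates off $S$ at a basepoint, the normalization $\alpha_e=1$ together with \eqref{equ:bigsmall} gives $\rho_{F_n}(\tilde z,\tilde w)\ge d_\infty(z,w)$ on this copy, so every $\epsilon$-embedding of $(K^G,\rho_{F_n})$ restricts to one of $(K^{|S|},d_\infty)$; hence $\Widim_\epsilon(K^G,\rho_{F_n})\ge\Widim_\epsilon(K^{|S|},d_\infty)$. The crucial point is what lower bound for $\Widim_\epsilon(K^m,d_\infty)$ is fed into this. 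A direct use of Lemma \ref{widimlowerbound} only produces $\dim K-1$, which is too weak when $c=\dim K$. Instead I would apply Lemma \ref{widimlowerbound} to the \emph{block space} $L:=(K^M,d_\infty)$ for a large fixed $M$: this yields $\delta>0$ with $\Widim_\epsilon(K^{Mk},d_\infty)=\Widim_\epsilon\bigl(L^k,(d_\infty)_\infty\bigr)\ge k\bigl(\dim(K^M)-1\bigr)$ for all $k$ and all $0<\epsilon<\delta$, using that $L^k=K^{Mk}$ carries the same $d_\infty$. Taking $S\subset F_n$ with $|S|=M\lfloor|F_n|/M\rfloor$ and dividing by $|F_n|$, the ratio $\Widim_\epsilon(K^G,\rho_{F_n})/|F_n|$ is bounded below by $\frac{|S|}{|F_n|}\cdot\frac{\dim(K^M)-1}{M}\to\frac{\dim(K^M)-1}{M}$. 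Since $\dim(K^M)/M\to c$, the right-hand side exceeds $c-\eta$ once $M$ is large, for every $\epsilon<\delta$; letting $\epsilon\to0$ gives $\mdim(K^G,\sigma)\ge c-\eta$, and $\eta\to0$ finishes.

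The main obstacle is precisely this lower bound: the naive restriction only sees $\Widim_\epsilon(K^m,d_\infty)$, whose normalized limits in $\epsilon\to0$ and $m\to\infty$ need not commute, so one cannot simply invoke Lemma \ref{widimlowerbound} for $K$ and pass to the limit. The device that resolves this is to apply Lemma \ref{widimlowerbound} to the grouped space $K^M$, which turns the ``$-1$'' defect into the negligible error $1/M$ and recovers $c$ as $M\to\infty$.
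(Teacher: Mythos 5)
Your proposal is correct and follows essentially the same route as the paper: the paper gives no separate proof of Corollary \ref{mdimKG}, presenting it as the straightforward amenable-group generalization of Tsukamoto's theorem \cite{Tsukamoto}, and your argument (upper bound via projections onto $EF_n$, lower bound via frozen-coordinate copies of $K^S$) is exactly that generalization. In particular, your key device --- applying Lemma \ref{widimlowerbound} to the block space $K^M$ so that the ``$-1$'' defect becomes an error of order $1/M$ --- is precisely Tsukamoto's trick, and it is also how the paper itself exploits Lemma \ref{widimlowerbound} in the proof of Proposition \ref{pro:lowerbound}, where the bound $\delta(J)\cdot\frac{\dim(K^m)-1}{m}$ is combined with Corollary \ref{mdimKG} by letting $m\to\infty$.
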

\section{A constructive proof of Theorem \ref{maintheorem a}}\label{sec:maintheorem a}

	The proof of Theorem \ref{maintheorem a} consists of three parts. 
	Part 1 is dedicated to the construction, while Part 2 and Part 3 are devoted to the argument that the $G$-action we constructed satisfies all the required conditions. The title of each part indicates the precise aim of the part.

Let us start with some necessary settings. Let $G=\{g_k:k\in\mathbb{N}\}$ be a countable infinite  discrete amenable group whose identity element is denoted by $e$. Let  $(K,d)$ be a finite-dimensional compact metric space and $ t\in (0,1)$.  
Let $\{\delta_n\}_{n=1}^\infty$ be a sequence strictly decreasing to zero and $\{K_n\}_{n=1}^\infty$ be an increasing sequence of finite $\delta_n$-dense  subsets of $K$. We take a symbol $\ast\notin K$ and set $\hat{K}=K\cup\{\ast\}$.

Our proof relies  heavily on a recent result due  to Downarowicz, Huczek and Zhang \cite{DHZ} on tilings of amenable groups, which provides the foundation for our proof. Notice that it is also feasible to obtain our result (and Dou's result \cite{Dou}) by using Ornstein and Weiss' quasi-tiling theory \cite{OW87}, but the proof will be notationally more complicated. 
\begin{theorem}[{\cite[Theorem 5.2]{DHZ}},{\cite[Theorem 3.6]{Dou}}]\label{tiling}

Let $G$ be a countable infinite  amenable group  with the identity element $e$, $\{A_n\}_{n=1}^\infty\subset\mathcal{F}(G)$ an increasing sequence with $\bigcup_{n=1}^{\infty}A_n=G$, and $\{\eta_n\}_{n=1}^\infty$ a decreasing sequence of positive numbers converging to zero. Then there exists a primely congruent sequence $\{\mathcal{T}_n\}_{n=1}^\infty$ of syndetic\footnote{The term ``syndetic'' here is equivalent to  the term ``irreducible'' in \cite{Dou}, due to \cite[Lemma 3.1]{Dou}.} finite tilings of $G$ satisfying the following conditions:
\begin{enumerate}
\item \label{propo1} $e\in S_{1,1}\subset S_{2,1}\subset\cdots\subset S_{n,1}\subset\cdots\subset\bigcup_{n=1}^{\infty}S_{n,1}=G$ and $e\in C(S_{n,1})$;
\item  \label{propo2} for every $n\in\mathbb{N}$ and every $1\le i\le \gamma_n$, $S_{n,i}$ is $(A_n,\eta_n)$-invariant;
\end{enumerate}
where for each $n\in\mathbb{N}$, $\{S_{n,i}:1\le i\le \gamma_n\}$ is the set of all shapes of $\mathcal{T}_n$.
\end{theorem}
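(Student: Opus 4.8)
The plan is to obtain the sequence $\{\mathcal{T}_n\}$ by an inductive construction in which each level is assembled out of the previous one, so that the nesting and the prime congruence are forced by design, while the analytic content (invariance of the shapes) is supplied afterwards by choosing the scales large enough at each step. The only external input I would use is the Ornstein--Weiss quasitiling theory \cite{OW87}: for every $K\in\mathcal{F}(G)$ and $\epsilon>0$ there are finitely many $(K,\epsilon)$-invariant shapes together with pairwise disjoint translates of them whose union is a subset of $G$ of density at least $1-\epsilon$. This is not circular, since it is a classical fact about quasitilings rather than exact ones.

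First I would isolate a \emph{one-step refinement lemma}: given a syndetic finite tiling $\mathcal{T}$ of $G$ whose shapes are uniformly bounded (this is automatic, since syndeticity of the centers together with finiteness of the shape set forces the tiles to have bounded diameter) and given a target $(A,\eta)$, produce a syndetic finite tiling $\mathcal{T}'$ such that every $\mathcal{T}'$-tile is a union of $\mathcal{T}$-tiles, the refinement is prime in the sense that the internal $\mathcal{T}$-decomposition of a $\mathcal{T}'$-tile depends only on its shape, and every shape of $\mathcal{T}'$ is $(A,\eta)$-invariant. Iterating this lemma along the prescribed sequences $\{A_n\}$ and $\{\eta_n\}$, and arranging at each step that a distinguished tile containing $e$ grows so as to exhaust $G$ with $e$ lying in its center, would yield exactly the sequence satisfying the two stated conditions.

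To prove the one-step lemma I would not retile $G$ from scratch, but rather group the existing $\mathcal{T}$-tiles. I would apply the quasitiling theory at a scale $K'$ and tolerance $\epsilon'$ chosen far finer than the geometry of $\mathcal{T}$ and far finer than $(A,\eta)$, obtaining large Følner sets, and then snap each such set to the nearest union of whole $\mathcal{T}$-tiles. Because $\mathcal{T}$ has bounded geometry, snapping alters each set only within a boundary layer of controlled thickness, so the resulting unions remain $(A,\eta)$-invariant. The gaps left uncovered are then filled by the standard Ornstein--Weiss exhaustion (greedy disjoint placement inside the complement, iterated until the uncovered density tends to $0$), always snapping to whole $\mathcal{T}$-tiles, until $G$ is covered exactly. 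To retain only finitely many super-shapes and to secure prime congruence, I would build every super-tile as a translate of one of finitely many fixed \emph{patterns} of $\mathcal{T}$-tiles; then any two $\mathcal{T}'$-tiles of the same shape carry, by construction, identical internal $\mathcal{T}$-structure up to the translation relating them, which is precisely the identity ${\mathcal{T}}|_{Sc_1}=({\mathcal{T}}|_{Sc_2})c_2^{-1}c_1$. Syndeticity of $\mathcal{T}'$ is inherited because each super-shape recurs with bounded gaps, a recurrence of the kind recorded in Proposition \ref{manytiles} applied at the previous level.

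The main obstacle is the passage from Ornstein--Weiss quasitilings, which tile only up to a set of small density and typically with overlaps, to \emph{exact} tilings using only finitely many Følner shapes: the greedy exhaustion does remove overlaps and gaps, but a priori it proliferates shapes, and the delicate point -- resolved in \cite{DHZ} -- is to carry out the gap-filling in a self-similar, pattern-based manner so that the shape set stays finite while invariance is preserved. A secondary difficulty is maintaining prime congruence simultaneously across all scales: the decompositions must be globally consistent, so the construction is genuinely hierarchical, and the verification reduces to checking that the assignment of patterns at level $n+1$ restricts correctly to the patterns already fixed at level $n$.
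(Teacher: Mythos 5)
First, a point of comparison: the paper does not prove Theorem \ref{tiling} at all --- it is imported wholesale from \cite[Theorem 5.2]{DHZ} and \cite[Theorem 3.6]{Dou}. So your proposal must stand on its own as a proof of that cited result, and it does not: it has a genuine gap at its center.

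The gap is that your one-step refinement lemma --- passing from Ornstein--Weiss quasitilings \cite{OW87} (pairwise disjoint Følner translates covering density $1-\epsilon$) to an \emph{exact} tiling of $G$ by translates of \emph{finitely many} fixed patterns of $\mathcal{T}$-tiles --- is precisely the content of the theorem being proved, and your sketch of it fails on the two points where the real work lies. (i) The ``greedy disjoint placement iterated until the uncovered density tends to $0$'' yields, after countably many rounds, a disjoint family whose union has density $1$, but density $1$ is not exactness: a residual set survives in general, and filling it exactly forces either shapes that are not $(A,\eta)$-invariant (arbitrarily small or ill-shaped gaps cannot be covered by large Følner sets) or an absorption of leftovers into neighboring tiles, which a priori produces infinitely many distinct shapes. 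Controlling this so that the shape set stays finite is the heart of \cite{DHZ}, and you explicitly defer it to \cite{DHZ} (``resolved in [DHZ]''), which makes the argument circular with respect to the statement under proof. (ii) Declaring that every super-tile shall be a translate of one of finitely many fixed patterns of $\mathcal{T}$-tiles cannot simply be imposed: a translate $Pc$ of a pattern $P$ is a union of whole $\mathcal{T}$-tiles only when $c$ is compatible with the centers of $\mathcal{T}$, and the existence of an exact partition of $G$ by such center-constrained translates is again the nontrivial combinatorial fact you are supposed to establish; prime congruence is then easy, but only \emph{after} that existence is secured. A smaller but real defect: syndeticity of the centers of the refined tiling does not follow from Proposition \ref{manytiles}, which presupposes a syndetic tiling as input and says nothing about the centers of a newly built one; in \cite{Dou} this requires its own argument (the ``irreducibility'' of \cite[Lemma 3.1, Theorem 3.6]{Dou}). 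Your outer inductive skeleton --- iterate a refinement step along $(A_n,\eta_n)$ while a distinguished tile containing $e$ exhausts $G$ --- is consistent with how \cite{Dou} organizes the derivation from \cite{DHZ}, but as written the proposal assumes its key lemma rather than proving it.
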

\noindent\textbf{Part 1: Construction of $(X,\sigma)$.} 
The construction of $(X,\sigma)$ will be fulfilled by induction. To start with, let us explain the intuitive meaning of our notations shortly. 

For a positive integer $k$ we build in the $k$-th step a subshift $(X_k,\sigma)$ of $(K^G,\sigma)$, which is generated from finitely many ``blocks'' $\{B_{k,i}\}_{1\le i\le \gamma_{n_k}}$, where each $B_{k,i}$ is a closed subset of $K^{S_{n_k,i}}$ and  ${S_{n_k,i}}$ indicates the ``shape" of the block  $B_{k,i}$. When we deal with the next block $B_{k+1,j}$ in the $(k+1)$-th step, we will ``copy'' the blocks $\{B_{k,i}\}_{1\le i\le \gamma_{n_k}}$ sufficiently many times  and add some  new  restrictions to ensure that ``the proportion of freedom'' $\frac{|J_k|}{|T_k^\prime|}$ gradually decreases to the desired value $t$. 

To begin with,   we take a  decreasing sequence $\{\eta_n\}_{n=1}^\infty$ of positive numbers converging to zero and an increasing sequence $\{A_n\}_{n=1}^\infty\subset\mathcal{F}(G)$ with $\bigcup_{n=1}^{\infty}A_n=G$. By Theorem \ref{tiling}, there exists a primely congruent sequence $\{\mathcal T_n\}$ of syndetic finite tilings of $G$ with the sets of shapes $S_{\mathcal T_{n}}=\{S_{n,i}:1\le i\le \gamma_n\} $ satisfying property \eqref{propo1} and \eqref{propo2} in Theorem \ref{tiling}.

\textbf{Step 1.}
Choose $n_1\in\mathbb{N}$ sufficiently large so that for every $1\le i\le \gamma_{n_1}$ there exists $x_{1,i}\in\hat{K}^{S_{n_1,i}}$ with
$$t<\frac{|x_{1,i}(S_{n_1,i},\ast)|}{|S_{n_1,i}|}\le t+\frac{1}{|S_{n_1,i}|}.$$
Let
$$B_{1,i}=\left\{x=(x_g)_{g\in S_{n_1,i}}\in K^{S_{n_1,i}}:\;x_g=(x_{1,i})_g,\;\forall g\in S_{n_1,i}\setminus x_{1,i}(S_{n_1,i},\ast)\right\}.$$
We define $x_1\in\hat{K}^G$ by
$${x_1}|_{S_{n_1,i}c}=x_{1,i},\quad\forall1\le i\le \gamma_{n_1},\;\forall c\in C(S_{n_1,i}).$$
We set
$$X^0_1=\left\{x\in K^G:\;x|_{S_{n_1,i}c}\in B_{1,i},\;\forall1\le i\le \gamma_{n_1},\;\forall c\in C(S_{n_1,i})\right\}$$
and  the subshift of $K^G$ generated by $X^0_1$, namely,
$$X_1=\overline{GX^0_1}=\overline{\bigcup_{g\in G}gX^0_1}.$$
\textbf{Step 2.}
Applying Proposition \ref{manytiles}, we choose $l_1\in\mathbb{N}$ sufficiently large such that we can find a finite subset $R_1\subset C(S_{n_1,1})$ and $h_1\in C(S_{n_1,1})$ satisfying
$$e\in R_1,\quad h_1\notin R_1,\quad S_{n_1,1}R_1\cup S_{n_1,1}h_1\subset S_{l_1,1},\quad|R_1|=|K_1|^{|x_{1,1}(S_{n_1,1},\ast)|}.$$
We select $w_1\in\hat{K}^{S_{l_1,1}}$ by modifying ${x_1}|_{S_{l_1,1}}$ on  $${x_1}(S_{n_1,1}R_1,\ast)={x_1}(S_{n_1,1},\ast)R_1 ={x_{1,1}}(S_{n_1,1},\ast)R_1$$ such that Conditions (A.2.1), (A.2.2), and (A.2.3) are satisfied:
\begin{enumerate}
\item[(A.2.1)]
${w_1}|_{S_{n_1,1}r\setminus{x_{1,1}(S_{n_1,1},\ast)r}}={x_{1,1}}|_{S_{n_1,1}\setminus{x_{1,1}(S_{n_1,1},\ast)}}$, $\forall r\in R_1$;
\item[(A.2.2)]
${w_1}|_{x_{1,1}(S_{n_1,1},\ast)r}\in K_1^{|x_{1,1}(S_{n_1,1},\ast)|}$ $(r\in R_1)$ are pairwise distinct, i.e., \footnote{In this context, as well as in (A.k.2), we say that ${w_1}|_{x_{1,1}(S_{n_1,1},\ast)}={w_1}|_{x_{1,1}(S_{n_1,1},\ast)r}$ if and only if $(w_1)_g=(w_1)_{gr}\in K_1$ for all $g\in{x_{1,1}(S_{n_1,1},\ast)}$.}
$$\left\{{w_1}|_{x_{1,1}(S_{n_1,1},\ast)r}:\;r\in R_1\right\}=K_1^{|x_{1,1}(S_{n_1,1},\ast)|};$$
\item[(A.2.3)]
if $S_{n_1,i}c\subset S_{l_1,1}\setminus S_{n_1,1}R_1$ for some $1\le i\le \gamma_{n_1}$ and some $c\in C(S_{n_1,i})$ then
$${w_1}|_{S_{n_1,i}c}=x_{1,i}.$$
\end{enumerate}
Clearly,
$${w_1}|_{S_{n_1,1}h_1}=x_{1,1},\quad{w_1}|_{S_{n_1,1}}\in B_{1,1}\subset K^{S_{n_1,1}}.$$
We pick $n_2\in\mathbb{N}$ sufficiently large such that Conditions (B.2.1), (B.2.2), and (B.2.3) are satisfied:
\begin{enumerate}
\item[(B.2.1)]
$g_1h_1\in S_{n_2,1}$;
\item[(B.2.2)]
for every $1\le i\le \gamma_{n_2}$, there is $c_{1,i}\in C(S_{l_1,1})$ such that $S_{l_1,1}c_{1,i}\subset S_{n_2,i}$, and moreover, $c_{1,1}=e$;
\item[(B.2.3)]
for every $1\le i\le \gamma_{n_2}$, $|S_{l_1,1}|$ is negligible compared with $|S_{n_2,i}|$, more precisely,
$$\frac{|x_1(S_{n_2,i}\setminus S_{l_1,1}c_{1,i},\ast)|}{|S_{n_2,i}|}>t,
\quad\forall1\le i\le \gamma_{n_2}.$$
\end{enumerate}
For every $1\le i\le \gamma_{n_2}$, we choose $x_{2,i}\in\hat{K}^{S_{n_2,i}}$ by partially modifying ${x_1}|_{S_{n_2,i}}$ on  ${x_1}(S_{n_2,i},\ast)$ such that Conditions (C.2.1), (C.2.2), and (C.2.3) are satisfied:
\begin{enumerate}
\item[(C.2.1)]
${x_{2,i}}|_{S_{l_1,1}c_{1,i}}=w_1$;
\item[(C.2.2)]
if $S_{n_1,j}c\subset S_{n_2,i}\setminus S_{l_1,1}c_{1,i}$ for some $1\le j\le \gamma_{n_1}$ and some $c\in C(S_{n_1,j})$ then
$$(x_{2,i})_{gc}=(x_{1,j})_g,\quad\forall g\in S_{n_1,j}\setminus x_{1,j}(S_{n_1,j},\ast);$$
\item[(C.2.3)]
on the rest of coordinates in $S_{n_2,i}\setminus S_{l_1,1}c_{1,i}$,  we can replace some $\ast$'s with elements in $K$ such that there are appropriately many $\ast$'s satisfying
$$t<\frac{|x_{2,i}(S_{n_2,i},\ast)|}{|S_{n_2,i}|}\le t+\frac{1}{|S_{n_2,i}|}.$$
\end{enumerate}
Let
$$B_{2,i}=\left\{x=(x_g)_{g\in S_{n_2,i}}\in K^{S_{n_2,i}}:\;x_g=(x_{2,i})_g,\;\forall g\in S_{n_2,i}\setminus x_{2,i}(S_{n_2,i},\ast)\right\}.$$
We define $x_2\in\hat{K}^G$ by
$${x_2}|_{S_{n_2,i}c}=x_{2,i},\quad\forall1\le i\le \gamma_{n_2},\;\forall c\in C(S_{n_2,i}).$$
We set
$$X^0_2=\left\{x\in K^G:\;x|_{S_{n_2,i}c}\in B_{2,i},\;\forall1\le i\le \gamma_{n_2},\;\forall c\in C(S_{n_2,i})\right\}$$
and the subshift of $K^G$ generated by $X^0_2$, namely,
$$X_2=\overline{GX^0_2}=\overline{\bigcup_{g\in G}gX^0_2}.$$

To proceed, we assume that $x_{k-1,i}$, $B_{k-1,i}$ ($1\le i\le \gamma_{n_{k-1}}$), $x_{k-1}$, $X^0_{k-1}$,  and $X_{k-1}$ have been already generated in Step $(k-1)$. Now we generate $x_{k,i}$, $B_{k,i}$ ($1\le i\le \gamma_{n_k}$), $x_k$, $X^0_k$, and $X_k$ in Step $k$ ($k\ge2$).

\textbf{Step k.}
By Proposition \ref{manytiles}, we take $l_{k-1}\in\mathbb{N}$ large enough such that we can find a finite subset $R_{k-1}\subset C(S_{n_{k-1},1})$ and $h_{k-1}\in C(S_{n_{k-1},1})$ satisfying
$$e\in R_{k-1},\quad h_{k-1}\notin R_{k-1},\quad S_{n_{k-1},1}R_{k-1}\cup S_{n_{k-1},1}h_{k-1}\subset S_{l_{k-1},1},$$
$$|R_{k-1}|=|K_{k-1}|^{|x_{k-1,1}(S_{n_{k-1},1},\ast)|}.$$
We select $w_{k-1}\in\hat{K}^{S_{l_{k-1},1}}$  by modifying ${x_{k-1}}|_{S_{l_{k-1},1}}$ on  $${x_{k-1}}(S_{n_{k-1},1}R_{k-1},\ast)={x_{k-1}}(S_{n_{k-1},1},\ast)R_{k-1} ={x_{{k-1},1}}(S_{n_{k-1},1},\ast)R_{k-1},$$  such that Conditions (A.k.1), (A.k.2), and (A.k.3) are satisfied:
\begin{enumerate}
\item[(A.k.1)]
${w_{k-1}}|_{S_{n_{k-1},1}r\setminus{x_{k-1,1}(S_{n_{k-1},1},\ast)r}}={x_{k-1,1}}|_{S_{n_{k-1},1}\setminus{x_{k-1,1}(S_{n_{k-1},1},\ast)}}$, $\forall r\in R_{k-1}$;
\item[(A.k.2)]
${w_{k-1}}|_{x_{k-1,1}(S_{n_{k-1},1},\ast)r}\in K_{k-1}^{|x_{k-1,1}(S_{n_{k-1},1},\ast)|}$ $(r\in R_{k-1})$ are pairwise distinct, i.e.,
$$\left\{{w_{k-1}}|_{x_{k-1,1}(S_{n_{k-1},1},\ast)r}:\;r\in R_{k-1}\right\}=K_{k-1}^{|x_{k-1,1}(S_{n_{k-1},1},\ast)|};$$
\item[(A.k.3)]
if $S_{n_{k-1},i}c\subset S_{l_{k-1},1}\setminus S_{n_{k-1},1}R_{k-1}$ for some $1\le i\le \gamma_{n_{k-1}}$ and some $c\in C(S_{n_{k-1},i})$ then
$${w_{k-1}}|_{S_{n_{k-1},i}c}=x_{k-1,i}.$$
\end{enumerate}
Obviously,
$${w_{k-1}}|_{S_{n_{k-1},1}h_{k-1}}=x_{k-1,1},\quad{w_{k-1}}|_{S_{n_{k-1},1}}\in B_{k-1,1}\subset K^{S_{n_{k-1},1}}.$$
We pick $n_k\in\mathbb{N}$ sufficiently large such that Conditions (B.k.1), (B.k.2), and (B.k.3) are satisfied:
\begin{enumerate}
\item[(B.k.1)]
$g_{k-1}h_1h_2\cdots h_{k-1}\in S_{n_k,1}$;
\item[(B.k.2)]
for every $1\le i\le \gamma_{n_k}$, there is $c_{k-1,i}\in C(S_{l_{k-1},1})$ such that $S_{l_{k-1},1}c_{k-1,i}\subset S_{n_k,i}$, and moreover, $c_{k-1,1}=e$;
\item[(B.k.3)]
for every $1\le i\le \gamma_{n_k}$, $|S_{l_{k-1},1}|$ is negligible compared with $|S_{n_k,i}|$, more precisely,
$$\frac{|x_{k-1}(S_{n_k,i}\setminus S_{l_{k-1},1}c_{k-1,i},\ast)|}{|S_{n_k,i}|}>t,
\quad\forall1\le i\le \gamma_{n_k}.$$
\end{enumerate}
For every $1\le i\le \gamma_{n_k}$, we choose $x_{k,i}\in\hat{K}^{S_{n_k,i}}$ by partially modifying ${x_{k-1}}|_{S_{n_k,i}}$ on  ${x_{k-1}}(S_{n_k,i},\ast)$  such that Conditions (C.k.1), (C.k.2), and (C.k.3) are satisfied:
\begin{enumerate}
\item[(C.k.1)]
${x_{k,i}}|_{S_{l_{k-1},1}c_{k-1,i}}=w_{k-1}$;
\item[(C.k.2)]
if $S_{n_{k-1},j}c\subset S_{n_k,i}\setminus S_{l_{k-1},1}c_{k-1,i}$ for some $1\le j\le \gamma_{n_{k-1}}$ and some $c\in C(S_{n_{k-1},j})$ then
$$(x_{k,i})_{gc}=(x_{k-1,j})_g,\quad\forall g\in S_{n_{k-1},j}\setminus x_{k-1,j}(S_{n_{k-1},j},\ast);$$
\item[(C.k.3)]
on the rest of coordinates in $S_{n_k,i}\setminus S_{l_{k-1},1}c_{k-1,i}$, we can replace some $\ast$'s with elements in $K$ such that there are appropriately many $\ast$'s satisfying
$$t<\frac{|x_{k,i}(S_{n_k,i},\ast)|}{|S_{n_k,i}|}\le t+\frac{1}{|S_{n_k,i}|}.$$

\end{enumerate}
Let
$$B_{k,i}=\left\{x=(x_g)_{g\in S_{n_k,i}}\in K^{S_{n_k,i}}:\;x_g=(x_{k,i})_g,\;\forall g\in S_{n_k,i}\setminus x_{k,i}(S_{n_k,i},\ast)\right\}.$$
We define $x_k\in\hat{K}^G$ by
$${x_k}|_{S_{n_k,i}c}=x_{k,i},\quad\forall1\le i\le \gamma_{n_k},\;\forall c\in C(S_{n_k,i}).$$
We set
$$X^0_k=\left\{x\in K^G:\;x|_{S_{n_k,i}c}\in B_{k,i},\;\forall1\le i\le \gamma_{n_k},\;\forall c\in C(S_{n_k,i})\right\}$$
and the subshift of $K^G$ generated by $X^0_k$, namely,
$$X_k=\overline{GX^0_k}=\overline{\bigcup_{g\in G}gX^0_k}.$$

So far we have already generated $x_{k,i}$, $B_{k,i}$ ($1\le i\le \gamma_{n_k}$), $x_k$, $X^0_k$,  and $X_k$ in Step $k$ for all $k\in\mathbb{N}$. It follows from our construction that $\{X^0_k\}_{k=1}^\infty$  and $\{X_k\}_{k=1}^\infty$ are two decreasing sequences of non-empty closed subsets of $K^G$. Since $$S_{n_{k},1}\subset S_{n_{k+1},1}\setminus x_{k+1,1}(S_{n_{k+1},1},\ast)\subset S_{n_{m},1}\setminus x_{m,1}(S_{n_{m},1},\ast),\quad\forall k\in\mathbb{N},\;\forall m\ge k+1, $$ we deduce that 
$$x_{k+1}|_{S_{n_k,1}}=x_{k+1,1}|_{S_{n_k,1}}=w_{k}|_{S_{n_k,1}}=x_{m,1}|_{S_{n_k,1}}= x_m|_{S_{n_k,1}}\in K^{S_{n_k,1}},\quad\forall k\in\mathbb{N},\;\forall m\ge k+1.$$
Now by the fact $\bigcup_{k=1}^{\infty}S_{n_k,1}=G$ we observe that if a point $x$ belongs to the intersection $\bigcap_{k=1}^{\infty}X^0_k$ then the value $x_g\in K$ ($g\in G$) for all its coordinates must be determined eventually according to our construction. Indeed,
$x|_{S_{n_k,1}}=x_{k+1}|_{S_{n_k,1}}$ is determined in step $(k+1)$.
Thus, the intersection $\bigcap_{k=1}^{\infty}X^0_k$ is a singleton. We set
$$\bigcap_{k=1}^{\infty}X^0_k=\{z\}.$$
Finally, we let $X\subset K^G$ be the orbit closure of $z$, i.e.,
$$X=\overline{Gz}=\overline{\{gz:\;g\in G\}}.$$
Since $X$ is a closed subset of $K^G$ and is invariant under the $G$-shift, $(X,\sigma)$ becomes a subshift of $(K^G,\sigma)$, also a subsystem  of $(X_k,\sigma)$. This eventually finishes the construction of $(X,\sigma)$. Next we will check that $(X,\sigma)$ satisfies all the required properties.\\

\noindent\textbf{Part 2: Minimality of $(X,\sigma)$.}
To show that $(X,\sigma)$ is minimal, by Lemma \ref{minimality} it suffices to prove that the point $z\in X$ is almost periodic, i.e., for any $\epsilon>0$ there exists a syndetic subset $S=S_\epsilon$ of $G$ with
$$D(z,cz)<\epsilon,\quad\forall c\in S.$$

To see the later statement, we fix $\epsilon>0$ arbitrarily. Since $S_{k,1}$ is increasing over $k\in\mathbb{N}$ and eventually covers the group $G$, there exists $m\in\mathbb{N}$ such that for any two points $x=(x_g)_{g\in G}$ and $x^{\prime}=(x^{\prime}_g)_{g\in G}$  coming from $K^G$, we have
$$x|_{S_{n_m,1}}=x^\prime|_{S_{n_m,1}}\quad\text{implies}\quad D(x,x^\prime)<\epsilon.$$
Since the tiling $\mathcal{T}_{n_{m+1}}$ is syndetic, $C(S_{n_{m+1},1})$ is syndetic. By (A.k.3), (B.k.2), (C.k.1), and the definition of $z$, we have
$$z|_{S_{n_m,1}}=z|_{S_{n_m,1}c},\quad\forall c\in C(S_{n_{m+1},1})$$
i.e.,
$$z|_{S_{n_m,1}}=(cz)|_{S_{n_m,1}},\quad\forall c\in C(S_{n_{m+1},1}).$$
It follows that
$$D(z,cz)<\epsilon,\quad\forall c\in C(S_{n_{m+1},1}).$$
Thus, we end this part by taking $S=C(S_{n_{m+1},1})$.\\

\noindent\textbf{Part 3: Mean dimension of $(X,\sigma)$.}
The aim of this part is to prove
$$\mdim(X,\sigma)=t\cdot\mdim(K^G,\sigma).$$

To estimate the upper bound of $\mdim(X,\sigma)$, we will employ
Gromov's ``Pro-Mean Inequality'' (\cite[Propostion 1.9.1]{Gromov}). 
The following statement is due to Coornaert and Krieger \cite[Proposition 4.1]{M. Coornaert and F. Krieger}. 

\begin{proposition}[{\cite[Proposition 4.1]{M. Coornaert and F. Krieger}}] \label{upperbound}
Let $K$ be a finite-dimensional compact metrizable space and  $(X,\sigma)$  a subshift of $(K^{G},\sigma)$. Then one has
$$\mdim(X,\sigma)\le\liminf_{m\to\infty}\frac{\dim(\pi_{E_{m}}(X))}{|E_{m}|}$$
for any F\o lner sequence $\{E_{m}\}_{m=1}^{\infty}$ of $G$.
\end{proposition}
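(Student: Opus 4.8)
The plan is to bound the dynamical width $\Widim_\epsilon(X,\rho_{F_n})$ by the dimension of a \emph{single} coordinate projection onto a finite window, and then to arrange the combinatorics so that this window is essentially $E_m$ itself. First I would fix $\epsilon>0$ and choose a finite $W\subset G$ with $e\in W$ and $\diam(K,d)\sum_{h\notin W}\alpha_h<\epsilon/2$, which is possible by the summability of $(\alpha_g)_{g\in G}$. For $F\in\mathcal{F}(G)$ consider the enlarged set $WF$ and the projection $\pi_{WF}\colon X\to\pi_{WF}(X)\subset K^{WF}$. Since $K$ is finite-dimensional, $\pi_{WF}(X)$ is a finite-dimensional compact metrizable space, so by the identity $\dim(\cdot)=\lim_{\delta\to0}\Widim_\delta(\cdot)$ there is, for any $\delta>0$, a $\delta$-embedding $\psi$ of $\pi_{WF}(X)$ into a polyhedron $P$ with $\dim P=\dim\pi_{WF}(X)$. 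I would then check that for $\delta<\epsilon/(2\sum_{h}\alpha_h)$ the composition $\psi\circ\pi_{WF}\colon X\to P$ is an $\epsilon$-embedding with respect to $\rho_F$ (built from $\rho=D$): if $\psi(\pi_{WF}(x))=\psi(\pi_{WF}(y))$, then $x,y$ agree to within $\delta$ on every translate $Wg$ with $g\in F$, and splitting $D(gx,gy)=\sum_{h}\alpha_h d(x_{hg},y_{hg})$ into its window part (bounded by $(\sum_h\alpha_h)\delta<\epsilon/2$) and its tail (bounded by $\epsilon/2$) yields $\rho_F(x,y)=\max_{g\in F}D(gx,gy)<\epsilon$. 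This gives the key inequality
$$\Widim_\epsilon(X,\rho_F)\le\dim\pi_{WF}(X).$$

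Next I would package the combinatorics. Put $L=\liminf_m\dim\pi_{E_m}(X)/|E_m|$ and pass to a subsequence $\{E_{m_j}\}$ realizing this liminf; a subsequence of a F\o lner sequence is again a F\o lner sequence, and since the inner limit defining mean dimension is independent of the F\o lner sequence, I may compute $\mdim(X,\sigma)$ along $\{E_{m_j}\}$. Applying the key inequality with $F=E_{m_j}$ gives $\Widim_\epsilon(X,\rho_{E_{m_j}})\le\dim\pi_{WE_{m_j}}(X)$. To replace $WE_{m_j}$ by $E_{m_j}$, I would write $WE_{m_j}=E_{m_j}\sqcup(WE_{m_j}\setminus E_{m_j})$ and use subadditivity of projection dimension over disjoint unions, namely $\pi_{A\sqcup B}(X)\subseteq\pi_A(X)\times\pi_B(X)$ together with $\dim(Y_1\times Y_2)\le\dim Y_1+\dim Y_2$ and monotonicity of $\dim$ under subspaces, to obtain
$$\dim\pi_{WE_{m_j}}(X)\le\dim\pi_{E_{m_j}}(X)+|WE_{m_j}\setminus E_{m_j}|\cdot\dim(K).$$
Because $W$ is finite and $\{E_{m_j}\}$ is F\o lner, $|WE_{m_j}\setminus E_{m_j}|=o(|E_{m_j}|)$; dividing by $|E_{m_j}|$ and letting $j\to\infty$ gives $\limsup_j\Widim_\epsilon(X,\rho_{E_{m_j}})/|E_{m_j}|\le L$. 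Since this holds for every $\epsilon>0$, letting $\epsilon\to0$ yields $\mdim(X,\sigma)\le L$, as desired.

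The crux is the key inequality of the first paragraph, where the finite-dimensionality of $K$ is used (through $\Widim_\delta\le\dim$ applied to $\pi_{WF}(X)$) and the window $W$ is what makes $\rho_F$ depend, up to $\epsilon$, on only finitely many coordinates near $F$; the remainder is bookkeeping. The one genuine pitfall to avoid is that $F\mapsto\dim\pi_F(X)$ is \emph{not} monotone under inclusion, since coordinate projections can raise topological dimension, so one must never bound $\dim\pi_{WE_{m_j}}(X)$ by the dimension of a projection onto a still larger set; all comparisons have to be routed through disjoint-union subadditivity and the F\o lner estimate $|WE_{m_j}\setminus E_{m_j}|=o(|E_{m_j}|)$. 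Conceptually this is precisely Gromov's Pro-Mean Inequality specialized to the shift, with $W$ playing the role of the finite ``memory''.
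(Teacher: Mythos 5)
Your proposal is correct. Note that the paper itself does not prove this proposition: it is imported as a black box from Coornaert--Krieger \cite[Proposition 4.1]{M. Coornaert and F. Krieger}, with a pointer to Gromov's Pro-Mean Inequality. Your argument is a sound, self-contained reconstruction of exactly that standard proof: the window $W$ chosen from the summability of $(\alpha_g)$, the inequality $\Widim_\epsilon(X,\rho_F)\le\dim\pi_{WF}(X)$ obtained by composing $\pi_{WF}$ with a $\delta$-embedding of the finite-dimensional compactum $\pi_{WF}(X)$, and the F\o lner estimate $|WE_m\setminus E_m|=o(|E_m|)$ combined with the subadditivity $\dim\pi_{A\sqcup B}(X)\le\dim\pi_A(X)+\dim\pi_B(X)$. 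Two small points worth making explicit if you write this up: fix once and for all that the $\delta$-embedding of $\pi_{WF}(X)$ is taken with respect to the max metric $d_\infty$ on $K^{WF}$ (your coordinatewise estimate $d(x_{hg},y_{hg})<\delta$ for $h\in W$, $g\in F$ uses this), and that your use of the shift convention $(gx)_h=x_{hg}$ is what makes the relevant coordinates land in $WF$ rather than $FW$ --- you got this right, but it is the one place where a sign/side error would silently break the proof. Your closing caution is also well taken and genuinely necessary: $F\mapsto\dim\pi_F(X)$ is not monotone (a graph of a Peano curve already gives a counterexample), so routing all comparisons through products and subspaces, as you do, is the correct way around it.
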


\begin{proof}[Upper bound for the mean dimension of $(X,\sigma)$]
Firstly, we fix $k\in\mathbb{N}$ and $\epsilon>0$ arbitrarily.
Take a F\o lner sequence $\{E_m\}_{m=1}^\infty$ of $G$. By Proposition \ref{bigproportion}, there exists $M\in\mathbb{N}$ sufficiently large such that for any $m\ge M$, the union of $\mathcal{T}_{n_k}g$-tiles which are contained in $E_m$ has proportion larger than $1-\epsilon$ for all $g\in G$, i.e.,
\begin{equation}\label{bili}
\frac{|\bigcup_{T\in\mathcal{T}_{n_k}g,T\subset E_m}T|}{|E_m|}>1-\epsilon,\quad\forall m\ge M,\;\forall g\in G.
\end{equation}
For any $m\ge M$ we divide $G$ into $L_{m,k}$ classes $Q_1,Q_2,\dots,Q_{L_{m,k}}$ such that if $g,h\in Q_i$ for some $1\le i\le L_{m,k}$ then
$$\mathcal{T}_{n_k}g|_{E_m}=\mathcal{T}_{n_k}h|_{E_m},$$
where $\mathcal{T}_{n_k}g|_{E_m}=\{Tg\cap E_m:T\in\mathcal{T}_{n_k}\}$. In fact,   $L_{m,k}$ is the total number of such different  patterns. Since $E_m$ is finite, $L_{m,k}$ is a finite number which depends on $E_m$ and the shapes of $\mathcal{T}_{n_k} $. For each $1\le i\le L_{m,k}$ we take $q_i\in Q_i$, then there exists $j_i=j_i(m,k)\in\mathbb{N}$ such that
$$\mathcal{T}_{n_k}q_i|_{E_m}=\left\{S_{n_k,p_1}c_1q_i,S_{n_k,p_2}c_2q_i,\dots,S_{n_k,p_{j_i}}c_{j_i}q_i,\;A_i\right\}$$
for some $1\le p_l\le \gamma_{n_k}$, $c_l\in C(S_{n_k,p_l})$ ($1\le l\le j_i$) and some $A_i=A_i(m,k)\subset E_m$ with $|A_i|/|E_m|<\epsilon$. Since 
$$\pi_{E_m}({q_iX^0_k})\subset B_{k,p_1}\times B_{k,p_2}\times\cdots\times B_{k,p_{j_i}}\times K^{A_i},\quad\forall m\ge M $$
is a closed set,  by the construction of ${X_k}$, it follows  that 
\begin{align*}
\pi_{E_m}({X_k})
&\subset\overline{\bigcup_{g\in G}\pi_{E_m}({gX^0_k})}
&\subset\bigcup_{1\le i\le L_{m,k}}B_{k,p_1}\times B_{k,p_2}\times\cdots\times B_{k,p_{j_i}}\times K^{A_i},\quad\forall m\ge M.
\end{align*}
Thus, we have
\begin{align*}
\frac{\dim(\pi_{E_m}({X_k}))}{|E_m|}
&\le\max_{1\le i\le L_{m,k}}\frac{\dim(B_{k,p_1}\times B_{k,p_2}\times\cdots\times B_{k,p_{j_i}}\times K^{A_i})}{|E_m|}\\
&\le\max_{1\le i\le L_{m,k}}\frac{\dim K^{\sum_{1\le l\le j_i}|x_{k,p_l}(S_{n_k,p_l},\ast)|}+|A_i|\cdot\dim(K)}{|E_m|}.
\end{align*}
Let $t_{m,k}:=\max_{1\le i\le L_{m,k}}\sum_{1\le l\le j_i}|x_{k,p_l}(S_{n_k,p_l},\ast)|$. By (C.k.3) we have that
$$\max_{1\le i\le L_{m,k}}t\cdot\sum_{1\le l\le j_i}|S_{n_k,p_l}| < t_{m,k}\le\max_{1\le i\le L_{m,k}}\sum_{1\le l\le j_i}(t\cdot|S_{n_k,p_l}|+1),$$
then  for each $k\in \mathbb{N}$, we have 
 $$\lim_{m\to\infty}t_{m,k}=\infty\quad\text{and}\quad \frac {t_{m,k}} {|E_{m}|}\le t+\frac{1}{\min\{|S_{n_k,j}|:1\le j\le \gamma_{n_k}\}}.$$
Thus, by \eqref{bili} we obtain
\begin{align*}
\frac{\dim(\pi_{E_m}({X_k}))}{|E_m|}
&\le\frac{\dim(K^{t_{m,k}})}{t_{m,k}}\cdot\frac {t_{m,k}} {|E_m|}+\epsilon\cdot \dim(K)\\
&\le\frac{\dim(K^{t_{m,k}})}{t_{m,k}}\cdot\left(t+\frac{1}{\min\{|S_{n_k,j}|:1\le j\le \gamma_{n_k}\}}\right) +\epsilon\cdot \dim(K)
\end{align*}
for all $m\ge M$. By Proposition \ref{upperbound} and  Corollary \ref{mdimKG}, we obtain
$$\mdim({X_k},\sigma)\le \mdim(K^G,\sigma)\cdot(t+\frac{1}{\min\{|S_{n_k,j}|:1\le j\le \gamma_{n_k}\}}) +\epsilon\dim(K).$$
Since $k\in\mathbb{N}$ and $\epsilon>0$ are arbitrary, and since $\mdim(X,\sigma)\le\mdim({X_k},\sigma)$ for all $k\in\mathbb{N}$, it follows that
$$\mdim(X,\sigma)\le\lim_{k\to\infty}\left(\mdim(K^G,\sigma)\cdot(t+\frac{1}{\min\{|S_{n_k,j}|:1\le j\le \gamma_{n_k}\}}) \right)= t\cdot \mdim(K^G,\sigma).$$
This finishes the proof of the upper bound of $\mdim(X,\sigma)$.
\end{proof}

\medskip
 
We now turn to the lower bound of $\mdim(X,\sigma)$. In order to show 
$$\mdim(X,\sigma)\ge t\cdot\mdim(K^G,\sigma),$$
we need more preparations than the case when $K=P$ is a polyhedron.

The following proposition generalizes a useful tool for estimating the lower bound of the mean dimension, as presented  in \cite[Proposition 2.8]{Kri09}, from  polyhedra to  finite-dimensional compact metric spaces. 
We note  that this method is originally due to Lindenstrauss and Weiss for $P^{\mathbb{Z}}$, where $P$ is a polyhedron \cite[Proposition 3.3]{Lindenstrauss--Weiss}. To obtain Proposition \ref{pro:lowerbound}, we employ a recent tool due to Tsukamoto \cite{Tsukamoto} to overcome the difficulties in both topology and dimension theory arising from general alphabets.
\begin{proposition}
	\label{pro:lowerbound}
	Let G be a countable amenable group. Let $(X,\sigma)\subset (K^G,\sigma)$ be a  subshift, where $K$ is a finite-dimensional compact metrizable space.  Suppose that there exist $x'=(x'_g)_{g\in G}\in X$ and a subset $J\subset G$ satisfying the following condition
	$$\pi_{G\setminus J}(x)=\pi_{G\setminus J}(x')\Rightarrow x\in X,$$
	for all $x\in K^G.$
	
	Then we have
	$$\mdim(X,\sigma)\ge \delta(J)\mdim(K^G,\sigma).\footnote{Recall that the density $\delta(J)$ is defined in \eqref{density}.}$$
	
\end{proposition}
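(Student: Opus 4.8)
The plan is to exploit the hypothesis geometrically. The condition $\pi_{G\setminus J}(x)=\pi_{G\setminus J}(x')\Rightarrow x\in X$ says that the entire ``slice'' obtained by freezing the coordinates off $J$ to agree with $x'$ and letting the $J$-coordinates run freely over $K$ is contained in $X$; this slice is a copy of $K^{J}$ sitting inside $X$, and the mean dimension it forces should be exactly $\delta(J)\mdim(K^{G},\sigma)$. Fix a F\o lner sequence $\{F_{n}\}$. For each $n$ I would define $\Psi_{n}\colon K^{J\cap F_{n}}\to X$ by $(\Psi_{n}(a))_{g}=a_{g}$ for $g\in J\cap F_{n}$ and $(\Psi_{n}(a))_{g}=x'_{g}$ otherwise; since these points agree with $x'$ off $J$, the hypothesis guarantees $\Psi_{n}(a)\in X$. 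Writing $y=\Psi_{n}(a)$, $y'=\Psi_{n}(b)$ and using \eqref{equ:bigsmall} together with $(gy)_{e}=y_{g}$, one gets
$$\rho_{F_{n}}(y,y')=\max_{g\in F_{n}}D(gy,gy')\ge\max_{g\in F_{n}}d(y_{g},y'_{g})\ge\max_{g\in J\cap F_{n}}d(a_{g},b_{g})=d_{\infty}(a,b),$$
so $\Psi_{n}$ does not decrease distances. Composing an $\epsilon$-embedding of $(X,\rho_{F_{n}})$ with $\Psi_{n}$ then produces an $\epsilon$-embedding of $(K^{J\cap F_{n}},d_{\infty})$, whence $\Widim_{\epsilon}(X,\rho_{F_{n}})\ge\Widim_{\epsilon}(K^{J\cap F_{n}},d_{\infty})$.

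The main obstacle is recovering the sharp constant $\mdim(K^{G},\sigma)$ rather than the weaker $\dim(K)-1$. Applying Lemma \ref{widimlowerbound} directly to $K$ gives only $\Widim_{\epsilon}(K^{J\cap F_{n}},d_{\infty})\ge|J\cap F_{n}|(\dim(K)-1)$, which undershoots by up to $\delta(J)$ precisely when the Tsukamoto dichotomy gives $\mdim(K^{G},\sigma)=\dim(K)$. To overcome this I would enlarge the alphabet: fix $N\in\mathbb{N}$, set $L=K^{N}$ (a finite-dimensional compact metric space with $\dim L=\dim K^{N}$), partition $J\cap F_{n}$ into $q=\lfloor|J\cap F_{n}|/N\rfloor$ disjoint blocks of size $N$ (discarding a remainder of size $<N$), and restrict to the corresponding sub-slice $(K^{N})^{q}\hookrightarrow K^{J\cap F_{n}}$, which is isometric for $d_{\infty}$. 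By Lemma \ref{widimlowerbound} applied to $(L,d_{\infty})$ there is $\delta_{N}>0$ such that for all $0<\epsilon<\delta_{N}$,
$$\Widim_{\epsilon}(K^{J\cap F_{n}},d_{\infty})\ge\Widim_{\epsilon}(L^{q},d_{\infty})\ge q\,(\dim K^{N}-1)=\Big\lfloor\tfrac{|J\cap F_{n}|}{N}\Big\rfloor(\dim K^{N}-1).$$

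Finally I would assemble the limits in the correct order. Given $\eta>0$, choose by \eqref{density} a F\o lner sequence $\{F_{n}\}$ with $\limsup_{n}|J\cap F_{n}|/|F_{n}|>\delta(J)-\eta$. For fixed $N$ and $0<\epsilon<\delta_{N}$, dividing the previous display by $|F_{n}|$ and using $|F_{n}|\to\infty$ to absorb the floor and the remainder gives
$$\lim_{n\to\infty}\frac{\Widim_{\epsilon}(X,\rho_{F_{n}})}{|F_{n}|}\ge\frac{\dim K^{N}-1}{N}\cdot\limsup_{n\to\infty}\frac{|J\cap F_{n}|}{|F_{n}|}\ge\frac{\dim K^{N}-1}{N}\,(\delta(J)-\eta),$$
where the inner limit exists and is independent of the F\o lner sequence. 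Since the right-hand side does not depend on $\epsilon$, letting $\epsilon\to0$ yields $\mdim(X,\sigma)\ge\frac{\dim K^{N}-1}{N}(\delta(J)-\eta)$; letting $N\to\infty$ and invoking Corollary \ref{mdimKG} (so $\frac{\dim K^{N}-1}{N}\to\mdim(K^{G},\sigma)$), then $\eta\to0$, gives $\mdim(X,\sigma)\ge\delta(J)\mdim(K^{G},\sigma)$. The delicate points to verify are the distance estimate for $\Psi_{n}$ (which hinges on \eqref{equ:bigsmall}) and the harmless loss from the floor function, while the essential new ingredient is the alphabet-enlargement step that upgrades $\dim(K)-1$ to $\mdim(K^{G},\sigma)$.
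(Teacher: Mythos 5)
Your proof is correct, and while it shares the paper's overall skeleton --- use the hypothesis to embed a copy of $K^{J\cap F_n}$ into $X$ without decreasing the dynamical metric (via \eqref{equ:bigsmall} and $(gy)_e=y_g$), lower-bound $\Widim_\epsilon$ by Tsukamoto's Lemma \ref{widimlowerbound}, and recover the sharp constant through Corollary \ref{mdimKG} --- the mechanism by which you upgrade $\dim(K)-1$ to $\mdim(K^G,\sigma)$ is genuinely different. The paper enlarges the alphabet ``vertically'': it passes to the $m$-fold product system $X^m$, viewed as a subshift of $((K^m)^G,\sigma)$, embeds $(K^m)^{J\cap F_n}$ into $X^m$ by freezing the remaining coordinates at the diagonal point $(x'_g,\dots,x'_g)$, applies Lemma \ref{widimlowerbound} to the alphabet $K^m$, and then divides by $m$, which requires the product inequality $\mdim(X^m,\sigma)\le m\cdot\mdim(X,\sigma)$ (used there without proof). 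You instead enlarge ``horizontally'': staying in the original system, you group the coordinates of $J\cap F_n$ into $q=\lfloor|J\cap F_n|/N\rfloor$ blocks of size $N$, restrict to the isometrically embedded sub-slice $(K^N)^q\subset K^{J\cap F_n}$, and apply Lemma \ref{widimlowerbound} to the alphabet $K^N$; this needs only monotonicity of $\Widim_\epsilon$ under passing to subsets plus a floor estimate, which is harmless because $|F_n|\to\infty$ along any F\o lner sequence of an infinite group. The trade-off: your route is more self-contained (no appeal to subadditivity of mean dimension under products), at the cost of the floor bookkeeping; the paper's route is notationally cleaner, with the count $|J\cap F_n|$ exact. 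Your ordering of limits (fix $\eta$, then $N$, then $0<\epsilon<\delta_N$; let $\epsilon\to 0$, then $N\to\infty$, then $\eta\to 0$) is sound, and your choice of a F\o lner sequence nearly realizing $\delta(J)$ is equivalent to the paper's step of taking the supremum over arbitrary F\o lner sequences at the end.
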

\begin{proof}
 Consider a compatible metric $d$ on $K$.  For every $m\in\mathbb{N}$, the closed subset $X^m\subset K^m$ inherits  the metric from $K^m$, which is induced by $d_{\infty}$ as in \eqref{metricrho} and denoted by $D_{\infty}$.
Let $\{F_n\}_{n=1}^\infty$ be a F\o lner sequence of $G$ and fix $n\in\mathbb{N}$. Define $J_n=J\cap F_n$. Consider the embedding  $f_n:(K^m)^{J_n}\to (K^m)^G$ which associates to each $u=(u_g)_{g\in J_n}\in(K^m)^{J_n}$ the element $u'=(u'_g)_{g\in G}\in (K^m)^G$ defined by 
$$u^\prime_g=
\begin{cases}
u_g,&\quad g\in J_n,\\
(\underbrace{x_g',\cdots,x_g'}_{m-\text{times}}),&\quad g\in G\setminus J_n.
\end{cases}$$
The properties satisfied by $x'$ and $J$ imply  $f_n(u)\in X^m$ for all $u\in (K^m)^{J_n}$.
	Notice that  $(D_\infty)_{J_n}$ is a compatible metric on $(K^m)^G$ defined as in \eqref{equ:disdyn}. By \eqref{equ:bigsmall}, we deduce
	$$d_{\infty}(u,v)\le (D_\infty)_{J_n}(f_n(u),f_n(v)),\quad\forall u,v\in (K^m)^{J_n}.$$
	Combining this with the fact that $J_n\subset F_n,$ we deduce  
	$$d_{\infty}(u,v)\le (D_\infty)_{J_n}(f_n(u),f_n(v)) \le (D_\infty)_{F_n}(f_n(u),f_n(v)),\quad\forall u,v\in (K^m)^{J_n}.$$
	Hence we have that for any $\epsilon>0$ and $n\in \mathbb N,$ 
	$$\Widim_\epsilon\left(X^m,(D_\infty)_{F_n}\right)\ge\Widim_\epsilon\left((K^m)^{J_n},d_{\infty}\right).$$
	By Lemma \ref{widimlowerbound} we know that  there exists $\delta>0$, such that for any $0<\epsilon<\delta$  we deduce
	$$\Widim_\epsilon\left((K^m)^{J_n},d_{\infty}\right)\ge |J_n|\cdot\left(\dim(K^m)-1\right) ,\quad\forall n\in\mathbb{N}.$$
	Then we have 
	\begin{align*}
	\mdim(X,\sigma)
	&\ge\frac{\mdim(X^m,\sigma)}{m}\\
	&=\frac{1}{m}\cdot\lim_{\epsilon\to0}\lim_{n\to\infty}\frac{\Widim_\epsilon\left(X^m,(D_\infty)_{F_n}\right)}{|F_n|}\\
	&\ge \frac{1}{m}\cdot\lim_{\epsilon\to0}\limsup_{n\to\infty}\frac{\Widim_\epsilon\left((K^m)^{J_n},d_{\infty}\right)}{|F_n|}\\
	&\ge \limsup_{n\to\infty}\frac{|J_n|}{|F_n|}\cdot \frac{\dim(K^m)-1}{m}.\end{align*}
	Since $\{F_n\}_{n=1}^\infty$ is arbitrary,  we deduce  $$\mdim(X,\sigma)\ge\delta(J)\cdot\frac{\dim(K^m)-1}{m}.$$
	Sine $m\in\mathbb{N}$ is arbitrary,  by Corollary \ref{mdimKG} we finally conclude that
	$$\mdim(X,\sigma)\ge \delta(J)\cdot\mdim(K^G,\sigma).$$
\end{proof}

In order to establish the lower bound,  we need the following preparations.

For any $ k\in\mathbb{N}$ and $ m\ge k+1$,  we list some properties of our constructions according to (A.k.3), (B.k.2), and (C.k.1):
\begin{enumerate}
  \item[(a)]
       $S_{n_{k},1}h_k\cdots h_{m-1}\subset S_{n_{m},1}$;
  \item[(b)]
	$x_{k,1}(S_{n_{k},1},\ast)h_k\cdots h_{m-1}\subset x_{m,1}(S_{n_{m},1},\ast)$.
  \item[(c)]$(S_{n_{k},1}\setminus x_{k,1}(S_{n_{k},1},\ast))h_k\cdots h_{m-1}\subset  S_{n_{m},1}\setminus x_{m,1}(S_{n_{m},1},\ast).$

\end{enumerate}
Set
$$T_1^\prime=S_{n_1,1},\quad T_k^\prime=S_{n_k,1}h_{k-1}^{-1}\cdots h_1^{-1},\quad\forall k\ge2.$$
Since $\{S_{n_k,1}\}_{k=1}^\infty$ is a F\o lner sequence of $G$, so is the sequence $\{T_k^\prime\}_{k=1}^\infty$. According to (a), we have
$$S_{n_k,1}
\subset S_{n_{k+1},1}h_k^{-1},\quad\forall k\in\mathbb{N}.$$
It follows that
$$T_k^\prime=S_{n_k,1}h_{k-1}^{-1}\cdots h_1^{-1}\subset S_{n_{k+1},1}h_k^{-1}h_{k-1}^{-1}\cdots h_1^{-1}=T_{k+1}^\prime,\quad\forall k\in\mathbb{N}.$$
By (B.k.1),
$$g_k\in S_{n_{k+1},1}h_k^{-1}\cdots h_1^{-1}=T_{k+1}^\prime,\quad\forall k\in\mathbb{N}.$$
Therefore $\{T_k^\prime\}_{k=1}^\infty$ is an increasing F\o lner sequence of $G$ with $\bigcup_{k=1}^{\infty}T_k^\prime=G$.
Set
$$J_1=x_{1,1}(S_{n_{1},1},\ast),\quad
J_k=x_{k,1}(S_{n_{k},1},\ast)h_{k-1}^{-1}\cdots h_1^{-1},\quad\forall k\ge2.$$ Then $ J_k\subset T_k^\prime$. It follows from (b) that
$$x_{k,1}(S_{n_{k},1},\ast)h_k\subset x_{k+1,1}(S_{n_{k+1},1},\ast),\quad\forall k\in\mathbb{N}.$$
Thus,
$$J_k\subset J_{k+1},\quad\forall k\in\mathbb{N}.$$
Let
\begin{equation}\label{J}
	J=\bigcup_{k=1}^{\infty}J_k.
\end{equation}

The property of $X$ below is crucial for the proof  of lower bound of $\mdim(X,\sigma)$.
\begin{proposition}\label{projection}
There exists  $\omega \in K^{G\setminus J}$, such that for any $u\in K^J$ we can find $x=x(u)\in X$ such that
$$x|_J=u,\quad x|_{G\setminus J}=\omega.$$
In other words, we have
$$\pi_{G\setminus J}(x)=\omega\Rightarrow x\in X$$
    for all $x\in K^G.$
\end{proposition}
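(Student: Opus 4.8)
The plan is to exhibit, for every $u\in K^J$, a sequence of shifts of the single point $z$ that converges to a point agreeing with $u$ on $J$ and with a fixed configuration $\omega$ off $J$. Write $p_1=e$ and $p_k=h_1\cdots h_{k-1}$ for $k\ge2$, so that $T_k'=S_{n_k,1}p_k^{-1}$ and $J_k=x_{k,1}(S_{n_k,1},\ast)p_k^{-1}$. A preliminary bookkeeping step, which I would record first, is that $J\cap T_k'=J_k$: property (b) gives $x_{k,1}(S_{n_k,1},\ast)p_k^{-1}\subset J$, while property (c) shows that no coordinate $sp_k^{-1}$ with $s\in S_{n_k,1}\setminus x_{k,1}(S_{n_k,1},\ast)$ can lie in any $J_m$. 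Thus on the window $T_k'$ the set $J$ is seen precisely as the free part $J_k$ of the level-$k$ block. For each $k$ I will shift $z$ by an element of the form $g_k=p_k r_k$, with $r_k\in R_k$ to be chosen.

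The engine is the occurrence inside $z$ of the $|R_k|$ copies of the shape $S_{n_k,1}$. First, for every $r\in R_k$ one has $z|_{S_{n_k,1}r}=w_k|_{S_{n_k,1}r}$: indeed $S_{n_k,1}r\subset S_{l_k,1}$, and by (A.(k+1).1) and (A.(k+1).2) every coordinate of $w_k$ over $S_{n_k,1}r$ already takes a value in $K$ rather than the symbol $\ast$, hence is never overwritten at later stages (which only replace occurrences of $\ast$). Since $(g_k z)_{sp_k^{-1}}=z_{sr_k}=w_k|_{sr_k}$ for all $s\in S_{n_k,1}$, two facts follow: by (A.(k+1).1) the value equals $x_{k,1}|_s$ whenever $s\notin x_{k,1}(S_{n_k,1},\ast)$, \emph{independently of} $r_k$; and by (A.(k+1).2), as $r$ runs over $R_k$ the tuple $(w_k|_{sr})_{s\in x_{k,1}(S_{n_k,1},\ast)}$ runs over all of $K_k^{|x_{k,1}(S_{n_k,1},\ast)|}$.

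Given $u\in K^J$, I would pick $r_k\in R_k$ so that the realized tuple $(w_k|_{sr_k})_{s\in x_{k,1}(S_{n_k,1},\ast)}$ is a coordinatewise $\delta_k$-approximation, in the $\delta_k$-dense set $K_k$, of the target tuple $(u_{sp_k^{-1}})_{s\in x_{k,1}(S_{n_k,1},\ast)}$; this is possible because (A.(k+1).2) realizes \emph{every} tuple. On a free coordinate $h=sp_k^{-1}\in J_k$ this yields $d((g_k z)_h,u_h)<\delta_k$, so $(g_k z)_h\to u_h$ for every $h\in J$. On a rigid coordinate $h=sp_k^{-1}\in T_k'\setminus J_k$ the value $(g_k z)_h=x_{k,1}|_s$ does not depend on $r_k$, and --- this is the decisive point --- it does not depend on $k$ either for $k$ large: with $s_k=hp_k$ one has $s_{k+1}=s_kh_k$ and $x_{k+1,1}|_{s_kh_k}=w_k|_{s_kh_k}=x_{k,1}|_{s_k}$, using $x_{k+1,1}|_{S_{l_k,1}}=w_k$ from (C.(k+1).1) together with $w_k|_{S_{n_k,1}h_k}=x_{k,1}$. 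Hence $(g_k z)_h$ is eventually constant in $k$; I define $\omega_h$ to be this common value, which manifestly does not involve $u$, and thereby obtain $\omega\in K^{G\setminus J}$.

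Combining the two regimes, $g_k z$ converges coordinatewise, hence in $K^G$, to the point $x=x(u)$ determined by $x|_J=u$ and $x|_{G\setminus J}=\omega$. As each $g_k z\in Gz$ and $X=\overline{Gz}$ is closed, $x\in X$, which is exactly the assertion. I expect the main obstacle to be the stabilization of the rigid coordinates in the previous paragraph: it is what makes $\omega$ exist and be common to all $u$, and it hinges on the precise way the untouched copy on $S_{n_k,1}h_k$, which carries $x_{k,1}$, is planted into the level-$(k+1)$ block through $w_k$ and condition (C.(k+1).1). The remaining steps are approximation bookkeeping with the dense sets $K_k$ together with the fact that $\{T_k'\}$ exhausts $G$.
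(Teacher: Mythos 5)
Your proof is correct and follows essentially the same route as the paper's: both define $\omega$ from the stabilized non-$\ast$ coordinates of $z$ (via (A.$\cdot$.1), (A.$\cdot$.3), (C.$\cdot$.1)) and approximate an arbitrary $u\in K^J$ on the free coordinates by shifts $h_1\cdots h_{p-1}r$ with $r\in R_p$ chosen through (A.$\cdot$.2) and the $\delta_p$-density of $K_p$, concluding via the orbit closure $X=\overline{Gz}$. The only cosmetic difference is that you run a diagonal sequence of level-$k$ shifts converging coordinatewise, whereas the paper fixes a window $T_s'$ and, for each $\epsilon$, uses a single much deeper level $p$ to get one shift $\epsilon$-close to $x(u)$.
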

  We now prove the lower bound of $\mdim(X,\sigma)$ assuming Proposition \ref{projection}.
\begin{proof}[Lower bound for the mean dimension of $(X,\sigma)$]
Since $\{T_k'\}_{k=1}^\infty$ is a F\o lner sequence and $J_k= J\cap T_k'$ for all $k\in \mathbb{N}$, we have
$$\delta(J)\ge \limsup_{k\to \infty}\cfrac{|J\cap T_k'|}{|T_k'|}= \limsup_{k\to \infty}\cfrac{|J_k|}{|T_k'|}.$$
It follows from (C.k.3) that
$$t<\frac{|x_{k,1}(S_{n_k,1},\ast)|}{|S_{n_k,1}|}=\frac{|J_k|}{|T_k^\prime|}\le t+\frac{1}{|T_k^\prime|}.$$
Since $k\in\mathbb{N}$ is arbitrary, we obtain
$$\lim_{k\to \infty}\cfrac{|J_k|}{|T_k'|}=t.$$
Combining  this with Proposition \ref{pro:lowerbound} and  Proposition \ref{projection}, we have
$$\mdim(X,\sigma)\ge \delta(J)\cdot\mdim(K^G,\sigma)\ge \lim_{k\to \infty}\cfrac{|J_k|}{|T_k'|}\cdot \mdim(K^G,\sigma)=t\cdot\mdim(K^G,\sigma).$$

\end{proof}

It remains to prove Proposition \ref{projection}.

\begin{proof}[Proof of Proposition \ref{projection}]

Notice that by (c), $T_k^\prime\setminus J_k$ is increasing over $k\in\mathbb{N}$. Moreover, $G\setminus J=\bigcup_{k=1}^{\infty}(T_k^\prime\setminus J_k)$. For arbitrary $g\in G\setminus J$, choose the  smallest  $l(g)\in\mathbb{N}$ satisfying
$$g\in T_k^\prime\setminus J_k = S_{n_k,1}h_{k-1}^{-1}\cdots h_1^{-1}\setminus x_{k,1}(S_{n_k,1},\ast)h_{k-1}^{-1}\cdots h_1^{-1},\quad\forall k\ge l(g).$$
Define $$\omega=(\omega_g)_{g\in G\setminus J}=(z_{gh_1\cdots h_{l(g)-1}})_{g\in G\setminus J}\in K^{G\setminus J}.$$ Fix  $g\in G\setminus J$,  it follows  $gh_1\cdots h_{l(g)-1}\in S_{n_{l(g)},1}\setminus x_{l(g),1}(S_{n_{l(g)},1},\ast)$. For any  $k\ge l(g)$, we have 
 $gh_1\cdots h_{k-1}=(gh_1\cdots h_{l(g)-1})\cdot h_{l(g)}\cdots h_{k-1}$. Combining this with (A.k.3) and $z\in \bigcap_{k=1}^{\infty}X^0_k$,  we deduce that
$$z_{gh_1\cdots h_{l(g)-1}}=z_{gh_1\cdots h_{k-1}}.$$
For arbitrary  $u\in K^J$, we can define $x=x(u)\in  K^G $ by
$$x_g=\begin{cases}
\omega_g,&\quad g\in G\setminus J,\\
u_g,&\quad g\in J.
\end{cases}$$ 

We will show that $x\in X=\overline{\{gz:g\in G\}}$. In fact, it is enough to show that for any $\epsilon>0$, there exists  $c\in G$ such that
$D(x,cz)<\epsilon.$ 

To this end, we fix $\epsilon>0$ arbitrarily. Since $\{\delta_n\}_{n=1}^\infty$ is a sequence strictly decreasing to zero and $ T_k^\prime$ is increasing over $k\in\mathbb{N}$ which eventually covers the group $G$, there exist  $s=s(\epsilon)\in\mathbb{N}$ and $p=p(s,\epsilon)\ge\max_{g\in T_s^\prime\setminus J_s}\{l(g),s+1\}$ such that 
$$d_\infty(x^\prime|_{T^\prime _{s}},x^{\prime\prime}|_{T^\prime _{s}} )<\delta_{p} \quad\text{implies}\quad D(x,x^\prime)<\epsilon. $$
By (b), we have $x_{s,1}(S_{n_{s},1},\ast)h_s\cdots h_{p-1}\subset x_{p,1}(S_{n_{p},1},\ast)$.  Combining this with (A.k.2) and $z\in \bigcap_{k=1}^{\infty}X^0_k$, we deduce that 
$$K_{p}^{x_{s,1}(S_{n_s,1},\ast)}\subset \left\{{x_{p+1,1}}|_{x_{s,1}(S_{n_s,1},\ast)h_s\cdots h_{p-1}r}:\;r\in R_{p}\right\}=\left\{z|_{x_{s,1}(S_{n_s,1},\ast)h_s\cdots h_{p-1}r}:\;r\in R_{p}\right\} .$$
There exists $r_0\in R_{p}$, such that $$d_\infty(x|_{J_s}, z|_{J_sh_1\cdots h_{p-1}r_0})=d_\infty(x|_{J_s}, z|_{x_{s,1}(S_{n_s,1},\ast)h_s\cdots h_{p-1}r_0})<\delta_{p}.$$
Notice that $z|_{x_{s,1}(S_{n_s,1},\ast)h_s\cdots h_{p-1}r_0}=(h_1\cdots h_{p-1}r_0z)|_{J_s}$,  it follows $$d_\infty(x|_{J_s},(h_1\cdots h_{p-1}r_0z)|_{J_s})<\delta_{p}.$$
By (c), we have $(S_{n_s,1}\setminus x_{s,1}(S_{n_{s},1},\ast))h_s\cdots h_{p-1}\subset S_{n_{p},1}\setminus  x_{p,1}(S_{n_{p},1},\ast)$. Combining this with (A.k.1) and $z\in \bigcap_{k=1}^{\infty}X^0_k$, we deduce that $$z|_{(S_{n_s,1}\setminus x_{s,1}(S_{n_{s},1},\ast))h_s\cdots h_{p-1}r_0}=z|_{(S_{n_s,1}\setminus x_{s,1}(S_{n_{s},1},\ast))h_s\cdots h_{p-1}}.$$ 
Notice that $z|_{(S_{n_s,1}\setminus x_{s,1}(S_{n_{s},1},\ast))h_s\cdots h_{p-1}r_0}  =(h_1\cdots h_{p-1}r_0z)|_{T^\prime _s\setminus J_s}$ and $z|_{(S_{n_s,1}\setminus x_{s,1}(S_{n_{s},1},\ast))h_s\cdots h_{p-1}}= (h_1\cdots h_{p-1}z)|_{T^\prime _s\setminus J_s} $ ,  it follows $$(h_1\cdots h_{p-1}r_0z)|_{T^\prime _s\setminus J_s}= (h_1\cdots h_{p-1}z)|_{T^\prime _s\setminus J_s}. $$
Therefore,
$$x|_{T_s^\prime\setminus J_s}=\omega|_{T_s^\prime\setminus J_s}= (h_1\cdots h_{p-1}z)|_{T^\prime _s\setminus J_s}=(h_1\cdots h_{p-1}r_0z)|_{T_s^\prime\setminus J_s}.$$
It follows that
$$d _{\infty}(x|_{T_s^\prime},(h_1\cdots h_{p-1}r_0z)|_{T_s^\prime})<\delta_{p},$$ which implies  $$ D(x,h_1\cdots h_{p-1}r_0z)<\epsilon.$$  Taking $c=h_1\cdots h_{p-1}r_0$,  thus this completes the proof.
\end{proof}
\section{Proof of Theorem \ref{maintheorem b}}\label{sec:application}
 In this section, we will prove Theorem \ref{maintheorem b}  by constructing a subshift $(M,\sigma)$ of $([0,1]^G,\sigma).$

\begin{proof}[Proof of Theorem \ref{maintheorem b}] 
Let $\{\delta_n\}_{n=1}^\infty$ be a sequence strictly decreasing to zero with $0<\delta_n<\frac{1}{2^{n+1}}$ and $\{r_n\}_{n=1}^\infty$ be a sequence strictly increasing to 1 with $0<r_n<1$. For each $n\in\mathbb{N}$, let $I_n=[1-\frac{1}{2^{n-1}},1-\frac{1}{2^{n}}-\delta_n]$. 

Fix $n\in\mathbb{N}$, we now proceed to construct a subshift $(Y_n,\sigma)$ that corresponds to $r_n$ as follows.  Let $\{\mathcal{T}_k\}_{k=1}^{\infty}$ be the sequence of tilings in Theorem \ref{tiling}, then  $\{S_{k,1}\}_{k=1}^{\infty}$ is an increasing F\o lner sequence of $G$ with $\bigcup_{k=1}^{\infty}S_{k,1}=G$. We take a symbol $\ast\notin I_n$ and set $\hat{I_n}=I_n\cup\{\ast\}$, then choose $n_1\in\mathbb{N}$ sufficiently large so that for every $1\le i\le \gamma_{n_1}$ there is $y_{i}=y_{i}(n)\in\hat{I_n}^{S_{n_1,i}}$ with
\begin{equation}
\label{midu}r_n<\frac{|y_{i}(S_{n_1,i},\ast)|}{|S_{n_1,i}|} < 1.\end{equation}
	Let
	$$B_{1,i}=\left\{y=(y_g)_{g\in S_{n_1,i}}\in K^{S_{n_1,i}}:\;y_g=(y_{i})_g,\;\forall g\in S_{n_1,i}\setminus y_{1,i}(S_{n_1,i},\ast)\right\}.$$
	We define $y_1\in\hat{K}^G$ by
	$${y_1}|_{S_{n_1,i}c}=y_{i},\quad\forall1\le i\le \gamma_{n_1},\;\forall c\in C(S_{n_1,i}),$$
	then we set
	$$Y_n^0=\left\{y\in I_n^G:\;y|_{S_{n_1,i}c}\in B_{1,i},\;\forall1\le i\le \gamma_{n_1},\;\forall c\in C(S_{n_1,i})\right\}$$
	and  the subshift of $K^G$ generated by $Y^0_n$, namely, 
	$$Y_n=\overline{GY_n^0}=\overline{\bigcup_{g\in G}gY_n^0}.$$

Next, we define
$$ M=\cup_{n=1}^{\infty}Y_n\cup\{1\}^{G}.$$
Hence $M$ is a shift-invariant closed set. For any $r\in[0,1)$,  there exists $n\in \mathbb{N}$ such that  $r_n>r$, now we claim that 
\begin{cl}
\label{Uni}

There exists a  minimal subsystem $(X,\sigma)$ of $(Y_n,\sigma)$ with a mean dimension equal to $r$. 
    
\end{cl}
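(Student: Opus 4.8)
The plan is to realize $(X,\sigma)$ by re-running the inductive construction from the proof of Theorem \ref{maintheorem a} inside $Y_n$, with alphabet $K=I_n$ and target proportion $t=r$. First I would record that $I_n$ is a nondegenerate compact interval: its length is $\frac{1}{2^n}-\delta_n>0$ since $\delta_n<\frac{1}{2^{n+1}}$, so $I_n$ is homeomorphic to $[0,1]$ and $\mdim(I_n^G,\sigma)=1$ (by Corollary \ref{mdimKG}, or directly). Thus, since $0\le r<r_n<1$, the value $t=r$ is an admissible target, and Part 3 of the proof of Theorem \ref{maintheorem a} would yield $\mdim(X,\sigma)=t\cdot\mdim(I_n^G,\sigma)=r$ once the construction is carried out.

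The key point is to arrange the very first step of that construction so that the resulting orbit closure lands inside $Y_n$. Recall that $Y_n$ is generated by the Step-1 blocks $B_{1,i}$, in which the non-$\ast$ coordinates of $y_i$ are frozen and the $\ast$-coordinates (of proportion $>r_n$ by \eqref{midu}) are free over $I_n$. I would therefore choose the Step-1 datum $x_{1,i}\in\hat{I_n}^{S_{n_1,i}}$ of the Theorem \ref{maintheorem a} construction subject to the following compatibility requirements: on the non-$\ast$ positions of $y_i$ set $x_{1,i}=y_i$; let the $\ast$-set of $x_{1,i}$ be a subset of the $\ast$-set of $y_i$ of proportion in $(t,\,t+\tfrac{1}{|S_{n_1,i}|}]$; and fill the remaining $\ast$-positions of $y_i$ with arbitrary fixed values in $I_n$. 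This is possible precisely because $t=r<r_n$ is strictly smaller than the $\ast$-proportion of $y_i$, so there is enough freedom to thin it down to the window $(t,\,t+\tfrac{1}{|S_{n_1,i}|}]$. With this Step 1, running Steps $2,3,\dots$ verbatim as in Theorem \ref{maintheorem a} produces a point $z$ and the minimal subshift $X=\overline{Gz}$, minimality being exactly Part 2 of that proof, which only uses the syndeticity of the tilings together with conditions (A.k.3), (B.k.2), (C.k.1), all unaffected by the choice of $t$.

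Next I would verify $X\subseteq Y_n$. Conditions (A.k.1) and (C.k.2) guarantee that at every later step the already-fixed (non-$\ast$) coordinates are never altered; only $\ast$-coordinates get fixed, and always to values of $K=I_n$. Consequently every point of $X$ restricts on each tile $S_{n_1,i}c$ to an element that agrees with $y_i$ off the $\ast$-set of $y_i$ and lies in $I_n$ on it, i.e.\ to an element of $B_{1,i}$; hence $z\in Y_n^0$ and $X=\overline{Gz}\subseteq\overline{GY_n^0}=Y_n$, while $\mdim(X,\sigma)=r$ as noted. I expect the main obstacle to be precisely this Step-1 bookkeeping: one must check that the thinned $\ast$-set can be chosen compatibly with the finer sub-tile structure used in Steps $\ge 2$ (so that (C.k.2) and (C.k.3) remain satisfiable), and that the initial proportion $>r_n$, which is larger than the window $(t,\,t+\tfrac{1}{|S_{n_1,1}|}]$ demanded in the original Step 1, does not corrupt the mean-dimension estimates. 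It does not, since both the upper bound (which depends only on the Step-$k$ proportions, bounded by $t+\tfrac{1}{\min\{|S_{n_k,j}|\}}\to t$) and the lower bound (governed by $|J_k|/|T_k'|\to t$) are controlled by the later steps rather than by Step 1. Finally, the degenerate case $r=0$ is handled by the same scheme with the $\ast$-proportion driven to $0$ (e.g.\ a single free coordinate per top tile), which preserves minimality and forces $\mdim(X,\sigma)=0$.
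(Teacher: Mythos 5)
Your proposal is correct and follows essentially the same route as the paper: Step 1 thins the $\ast$-set of each $y_i$ down to a subset of proportion in $(r,\,r+\tfrac{1}{|S_{n_1,i}|}]$, and then Steps $2,3,\dots$ of the proof of Theorem \ref{maintheorem a} are run verbatim with $K=I_n$ and $t=r$, giving minimality, the containment $X\subseteq Y_n$, and $\mdim(X,\sigma)=r\cdot\mdim(I_n^G,\sigma)=r$. Your extra verifications (that $X_1^0\subseteq Y_n^0$ forces $X\subseteq Y_n$, and that the dimension estimates depend only on the later-step proportions, not on the initial proportion of $y_i$) merely make explicit what the paper leaves implicit.
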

\begin{proof}
The construction of  minimal dynamical system $(X,\sigma)$ will be fulfilled by induction:

\textbf{Step 1.} Since $r_n>r$, by \eqref{midu} we  can define $x_{1,i}=x_{1,i}(n,r)\in\hat{I_n}^{S_{n_1,i}}$ by replacing some $*$'s in $y_{i}$ with element in $I_n$
 such that there are appropriately many $*$'s satisfying
 $$r<\frac{|x_{1,i}(S_{n_1,i},\ast)|}{|S_{n_1,i}|}\le r+\frac{1}{|S_{n_1,i}|}.$$
 Let
$$B^n_{1,i}=\left\{x=(x_g)_{g\in S_{n_1,i}}\in I_n^{S_{n_1,i}}:\;x_g=(x_{1,i})_g,\;\forall g\in S_{n_1,i}\setminus x_{1,i}(S_{n_1,i},\ast)\right\}.$$
We define $x_1\in\hat{I_n}^G$ by
$${x_1}|_{S_{n_1,i}c}=x_{1,i},\quad\forall1\le i\le \gamma_{n_1},\;\forall c\in C(S_{n_1,i}).$$
We set
$$X^0_1=X^0_{1}(n,r)=\left\{x\in I_n^G:\;x|_{S_{n_1,i}c}\in B^n_{1,i},\;\forall1\le i\le \gamma_{n_1},\;\forall c\in C(S_{n_1,i})\right\}$$

and
$$X_{1}=X_1(n,r)=\overline{GX^0_{1}}=\overline{\bigcup_{g\in G}gX^0_{1}}.$$

\textbf{Step 2.} Let $t=r,$ $K=I_n$ and $X_{1}=X_{1}(n,r)$ 
in the proof of Theorem \ref{maintheorem a} (Step 2). We can find $n_2\in \mathbb N$, $x_2\in \hat{I}_n^{G}$,  
$X_{2}=X_2(n,r)$ and $x_{2,i}=x_{2,i}(n,r)$ satisfies $$ r<\frac{|x_{2,i}(S_{n_2,i},\ast)|}{|S_{n_2,i}|}\le r+\frac{1}{|S_{n_2,i}|}.$$

\textbf{Step k.} Let $t=r,$ $K=I_n$, and $X_k=X_{k}(n,r)$  in the proof of Theorem \ref{maintheorem a} (Step k). We can find $n_k\in \mathbb N$, $x_k\in \hat{I}_n^{G}$,  
$X_{k}=X_k(n,r)$ and $x_{k,i}=x_{k,i}(n,r)$ satisfies $$ r<\frac{|x_{k,i}(S_{n_k,i},\ast)|}{|S_{n_k,i}|}\le r+\frac{1}{|S_{n_k,i}|}.$$

By using the same method again  as in  the proof of  Theorem \ref{maintheorem a},  we can find $z=z(n,r)\in I_n^{G}$
 satisfying
 $$\bigcap_{k=1}^{\infty} X_{k}^0=\{z\},$$
then set $$X=\overline{Gz}=\overline{\{gz:\;g\in G\}}.$$
Using the same method again  as in  the proof of  Theorem \ref{maintheorem a}, we can prove that $(X,\sigma)$ is a minimal system of mean dimension $r$.
\end{proof}

Since each minimal subshift of $(M,\sigma)$ will either be contained in some $Y_n$ or be a singleton, the  attainable values of  the mean dimension for  minimal subsystems  will never reach $1$. Combining this with the fact that $r_n\le\mdim(M,\sigma)\le 1$ for arbitrary $n \in\mathbb{N}$, it follows that $$\mdim(M,\sigma)=1.$$ This concludes the proof of Theorem \ref{maintheorem b}.	
\end{proof} 

It is worth noting that in our construction, the alphabet $[0,1]$, the condition $\mdim(M,\sigma)=1$, and the interval of attainable values  $[0,1)$ can effectively be replaced by any polyhedron $P$, $\mdim(M,\sigma)=\dim(P)$, and $[0,\dim(P))$, respectively.

\end{document}